\def\td{tree-decom\-po\-si\-tion}
\newcommand{\G}{\mathcal G}
\newcommand{\Po}{\mathcal P}
\newcommand{\Q}{\mathcal Q}
\newcommand{\V}{\mathcal V}
\newcommand{\adm}{\rm adm}
\newcommand{\tw}{\rm tw}
\newcommand{\sub}{\subseteq}
\newcommand{\infadm}{\mbox{$\infty$-admissibility}}
\newcommand{\comment}[1]{}
\newtheorem{theorem}{Theorem}
\newtheorem{lemma}[theorem]{Lemma}
\newtheorem{corollary}[theorem]{Corollary}
\theoremstyle{definition}
\def\?#1{\vadjust{\vbox to 0pt{\vss\vskip-8pt\leftline{%
     \llap{\hbox{\vbox{\pretolerance=-1
     \doublehyphendemerits=0\finalhyphendemerits=0
     \hsize16truemm\tolerance=10000\small
     \lineskip=0pt\lineskiplimit=0pt
     \rightskip=0pt plus16truemm\baselineskip8pt\noindent
     \hskip0pt        
     #1\endgraf}\hskip7truemm}}}\vss}}}
     \title{On the block number of graphs}
     \author{Daniel Wei\ss auer}
     \date{}
\begin{document}
	
	\maketitle     
     
     	\begin{abstract}
		A \emph{$k$-block} in a graph~$G$ is a maximal set of at least~$k$ vertices no two of which can be separated in~$G$ by deleting fewer than~$k$ vertices. The \emph{block number}~$\beta(G)$ of~$G$ is the maximum integer~$k$ for which~$G$ contains a $k$-block.
		
		We prove a structure theorem for graphs without a $(k+1)$-block, showing that every such graph has a \td\ in which every torso has at most~$k$ vertices of degree~$2k^2$ or greater. This yields a qualitative duality, since every graph that admits such a decomposition has block number at most~$2k^2$.
		
		We also study $k$-blocks in graphs from classes of graphs~$\G$ that exclude some fixed graph as a topological minor, and prove that every $G \in \G$ satisfies $\beta(G) \leq c\sqrt[3]{|G|}$ for some constant $c = c( \G)$. 
		
		 Moreover, we show that every graph of tree-width at least~$2k^2$ has a minor containing a $k$-block. This bound is best possible up to a multiplicative constant. 
\end{abstract}	
	
	\begin{section}{Introduction}
			Given $k \in \mathbb{N}$, a set~$X$ of at least~$k$ vertices of a graph~$G$ is \emph{$(<\! k)$-inseparable} if no two vertices in~$X$ can be separated in~$G$ by deleting fewer than~$k$ vertices. A maximal such set is a \emph{$k$-block} and can be thought of as a highly connected part of the graph, although it may draw its connectivity from the ambient graph~$G$ rather than just the subgraph induced by~$X$ itself. The maximum integer~$k$ for which~$G$ contains a $k$-block is the \emph{block number} of~$G$, denoted by~$\beta(G)$.
						
		The notion of $k$-blocks is a successful concept in the theory of graph-decom\-positions. Carmesin, Diestel, Hundertmark and Stein~\cite{canondec} showed that $k$-blocks provide a natural model of a ``highly connected substructure'' into which a graph can be decomposed in a tree-like manner. This was further refined by Carmesin, Diestel, Hamann and Hundertmark~\cite{canondec1, canondec2} and by Carmesin and Gollin~\cite{pascal}. Following a question raised in~\cite{canondec}, the study of  graphs which do not contain $k$-blocks was initiated by Carmesin, Diestel, Hamann and Hundertmark in~\cite{forceblock}, with a focus on degree-conditions. Here, our emphasis lies on the structure of these graphs and we relate the block number to other width-parameters for graphs.

		Dualities between the occurrence of some highly connected substructure and a tree-like structure of the whole graph, such as between blockages and path-decompositions~\cite{pwdual}, brambles and \td s~\cite{twdual} or tangles and branch-decompositions~\cite{bwdual}, are of particular interest in structural graph theory. 
		
		A unified framework for duality theorems for width-parameters in graphs and matroids was developed by Diestel and Oum~\cite{duality1, duality2}. Based on this framework, Diestel, Eberenz and Erde~\cite{blockdual} proved a duality theorem for $k$-blocks and described a class~$\mathcal{T}_k$ of \td s such that a graph has no $k$-block if and only if it has a \td\ in~$\mathcal{T}_k$. The only downside is that~$\mathcal{T}_k$ is given rather abstractly and thus seems difficult to work with. 
		
		Here, we give a simpler class of \td s that still acts as an obstruction to the existence of a $k$-block, at the expense of a precise duality: We obtain a qualitative duality theorem with a numerical trade-off.

		\begin{theorem} \label{structure theorem}
			Let~$G$ be a graph and $k \geq 2$ an integer.
			
			\begin{enumerate}[(i)]
				\item If~$G$ has no $(k+1)$-block, then~$G$ has a \td\ in which every torso has at most~$k$ vertices of degree at least~$2k(k-1)$. Moreover, there is such a \td\ of adhesion less than~$k$.
				
				\item If~$G$ has a \td\ in which every torso has at most~$k$ vertices of degree at least~$k$, then~$G$ has no $(k+1)$-block.
			\end{enumerate}
			
		\end{theorem}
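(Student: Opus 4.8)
The plan is to prove part~(ii) by a short orientation argument, and to reduce part~(i) to a statement about highly connected graphs.

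\textit{Part (ii).} First note that the hypothesis already forces the adhesion to be at most~$k$: an adhesion set of size $\ge k+1$ would be a clique of that size in each of the two torsos containing it, giving them more than~$k$ vertices of degree $\ge k$. Now suppose for contradiction that $X$ is a $(k+1)$-block of~$G$. Since no two vertices of~$X$ can be separated by deleting at most~$k$ vertices, for every edge $tt'$ of~$T$ all of~$X$ except the (size-$\le k$) adhesion set $V_t\cap V_{t'}$ lies on one side of the corresponding separation; orienting every edge of~$T$ towards that side yields a consistent orientation, which — as consecutive adhesion sets have fewer than $|X|$ vertices — cannot point at an end and hence points at a node~$t$ with $X\subseteq V_t$. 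Moreover $X$ is a $(k+1)$-block of the torso~$H_t$ itself: a set of at most~$k$ vertices separating two vertices of~$X$ in~$H_t$ can be converted into one of the same size in~$G$ by rerouting any piece of an $x$--$y$ walk that leaves $V_t$ through the clique that~$H_t$ places on the relevant adhesion set. But $|X|\ge k+1$ exceeds the number of vertices of~$H_t$ of degree $\ge k$, so some $x\in X$ has $\deg_{H_t}(x)<k$, and then $N_{H_t}(x)$ separates~$x$ from the nonempty set $X\setminus(N_{H_t}(x)\cup\{x\})$ by fewer than~$k$ vertices, contradicting that $X$ is a $(k+1)$-block of~$H_t$.

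\textit{Part (i): reduction.} Work with \td s of~$G$ of adhesion less than~$k$ (these exist, so the ``moreover'' clause will be automatic), and fix one, $(T,\V)$, that is extremal in the sense that no part lies in a neighbouring part and, subject to this, $|V(T)|$ is as large as possible; concretely I would take a canonical such decomposition in the sense of Carmesin--Diestel--Hamann--Hundertmark, so that it also interacts well with connectivity and with blocks. If some torso~$H_t$ with $|V_t|\ge k$ had two vertices separated inside~$H_t$ by fewer than~$k$ vertices, this separation could be lifted to a separation of~$G$ of the same order that is compatible with $(T,\V)$ at~$t$ (each adhesion set at~$t$ is a clique in~$H_t$, hence lies inside one component of~$H_t$ minus that separator, together with the separator), and then~$t$ could be split, contradicting maximality of $|V(T)|$. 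So every torso is $k$-connected or has at most~$k$ vertices. A torso on at most~$k$ vertices trivially has at most~$k$ vertices of degree $\ge 2k(k-1)$; and, using that the chosen decomposition is well connected along its adhesion sets, a $(k+1)$-block of a torso is again a $(k+1)$-block of~$G$, so no torso of a $(k+1)$-block-free graph has a $(k+1)$-block. Thus the theorem reduces to the local statement below.

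\textit{Local statement.} \textit{Every $k$-connected graph $H$ without a $(k+1)$-block has at most $k$ vertices of degree $\ge 2k(k-1)$.} This is the crux, and I would attack it by induction on $|H|$. If $H$ is $(k+1)$-connected then its whole vertex set (of size $\ge k+1$) is $(<\!(k+1))$-inseparable, giving a $(k+1)$-block — excluded; so fix a separation $(A,B)$ of order exactly~$k$, whose two torsos $H[A]+K(A\cap B)$ and $H[B]+K(A\cap B)$ are again $k$-connected and $(k+1)$-block-free (a $(k+1)$-block of such a torso lifts to~$H$ by rerouting through the clique on $A\cap B$, exactly as in part~(ii)), so that the inductive hypothesis applies to each of them. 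A vertex of degree $\ge 2k(k-1)$ lying in $A\setminus B$ keeps all of its neighbours, and hence its degree, in the first torso, so the two sides between them account for all but at most~$k$ of the high-degree vertices of~$H$; the substance of the argument is to rule out having high-degree vertices genuinely spread across both sides \emph{and} the separator. Here one uses a Tur\'an/Menger argument inside the neighbourhood of a high-degree vertex~$v$: colour a pair of neighbours of~$v$ blue if they are $(<\!(k+1))$-inseparable in~$H$, so that a blue $K_k$ together with~$v$ would be a $(k+1)$-block; Tur\'an then produces a neighbour with roughly $2k(k-1)/(k-1)=2k$ red neighbours, all separated from it by sets through~$v$, and feeding this configuration into the induction on the smaller, $(k-1)$-connected graph $H-v$ should produce a $(<\!k)$-inseparable set of size $\ge k$ inside $N(v)$ — which with~$v$ is a $(k+1)$-block of~$H$. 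It is precisely the interplay of this count with the drop in connectivity $k\to k-1$ on deleting such a vertex that makes $2k(k-1)$ the right threshold.

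\textit{Main obstacle.} Part~(ii) and the reduction in part~(i) are routine once the right canonical decomposition is available; the genuine difficulty is the local statement, and in particular calibrating the Tur\'an/Menger estimates against that loss of one unit of connectivity so that the degree threshold comes out as $2k(k-1)$ rather than something larger.
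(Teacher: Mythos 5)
Your proof of part~(ii) is correct and essentially the paper's: both arguments first bound the adhesion by~$k$, push the block~$X$ into a single part~$V_t$, and then contradict the degree hypothesis via a vertex of~$X$ of small torso-degree. You do the last step by transferring the block to the torso, the paper by tracing the first exit points of Menger paths in~$G$; these are interchangeable.

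Part~(i), however, has a fatal gap: the ``local statement'' you reduce to is false. For $k=2$ it asserts that every $2$-connected graph with no $3$-block has at most two vertices of degree at least~$4$. Consider the graph obtained from a path $v_0v_1\cdots v_n$ by replacing each edge $v_iv_{i+1}$ with four internally disjoint $v_i$--$v_{i+1}$ paths of length~$3$. This graph is $2$-connected and every internal~$v_i$ has degree~$8 \geq 2k(k-1)$. Yet it has no $3$-block: a subdivision vertex~$w$ is cut off from everything outside $N(w)\cup\{w\}$ by its two neighbours, and the only candidate triple $\{w\}\cup N(w)$ contains a non-adjacent pair that is separated by two vertices; and any three of the~$v_i$ contain a non-consecutive pair, which is separated by a single vertex. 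So the number of high-degree vertices of a $k$-connected, $(k+1)$-block-free graph is unbounded, and no induction can prove otherwise; the correct assertion concerns only the torsos of a well-chosen decomposition. Your reduction to the local statement is also not watertight on its own terms: a $(k+1)$-block of a torso need not be a $(k+1)$-block of~$G$ unless the adhesion sets are externally linked (already in the $4$-cycle $x\,u\,y\,v$ with parts $\{x,u,v\}$ and $\{u,y,v\}$ the torso is a~$K_3$, hence a $3$-block, while $u$ and~$v$ are separated in~$G$ by $\{x,y\}$), and the extremality you impose (no part inside a neighbour, $|V(T)|$ maximal) does not supply that linkedness. The paper proceeds differently: it takes a decomposition of adhesion at most~$k$ with lexicographically minimal ``fatness'', shows that a vertex of torso-degree at least $2k(k-1)$ has a $(2k+1)$-fan into its own part, and invokes the Thomas--Bellenbaum leanness of such decompositions to link these fans globally, so that the high-degree vertices of each torso form a $(<\!k+1)$-inseparable set of~$G$ and hence number at most~$k$. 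Something of this kind is needed in place of the Tur\'an/Menger induction that you yourself flag as the main obstacle.
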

		
		This yields a qualitative duality: Every graph either has a $(k+1)$-block or a \td\ that demonstrates that it has no $2k^2$-block.

			We also study the block number of graphs in classes of graphs that do not contain some fixed graph as a topological minor. Dvo\v{r}\'{a}k~\cite{dvorak} implicitly characterized those classes~$\G$ for which there exists an upper bound on the block number of graphs in~$\G$. We shall make this characterization explicit in Section~\ref{minor-closed}.
			
			The absence of an absolute bound does not have to be the end of the story, however. For instance, while the tree-width of planar graphs cannot be bounded by a constant, the seminal Planar Separator Theorem of Lipton and Tarjan~\cite{liptontarjan} implies that $n$-vertex planar graphs have tree-width at most~$c \sqrt{n}$ for some constant $c > 0$. We prove a bound on the block number in the same spirit. Note that the Planar Separator Theorem can be extended to arbitrary minor-closed classes of graphs, as shown by Alon, Seymour and Thomas~\cite{AST}, but not to classes excluding a topological minor.
		
		\begin{theorem} \label{planar block theorem}
			Let~$\G$ be a class of graphs excluding some fixed graph as a topological minor. There exists a constant $c = c(\G)$ such that every $G \in \G$ satisfies $\beta(G) \leq c \sqrt[3]{|G|}$.
		\end{theorem}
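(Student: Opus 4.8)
\emph{Proof proposal.} Since $\G$ excludes some fixed graph $H$ as a topological minor, every $G\in\G$ is free of $K_s$-subdivisions for $s:=|V(H)|$ (a $K_s$-subdivision contains an $H$-subdivision), so it suffices to produce $c=c(s)$ with $\beta(G)^3\le c\,|G|$ whenever $G$ has no $K_s$-subdivision. For such $G$ we have $\|G\|\le c_0s^2|G|$ with $c_0$ absolute (Bollob\'as--Thomason, Koml\'os--Szemer\'edi); recall also that a graph of average degree $\ge d$ contains a $K_t$-subdivision with $t=\Omega(\sqrt d)$, and that both facts pass to topological minors of $G$, since ``no $K_s$-subdivision'' is closed under topological minors.

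\medskip
Let $B$ be a $k$-block of $G$ and set $m:=|B|\ge k$. I would first observe that $B$ consists of high-degree vertices: if some $v\in B$ had $d_G(v)<k-1$, then $N_G(v)$ would be a set of fewer than $k$ vertices separating $v$ from the non-empty set $B\setminus N_G[v]$, contradicting $(<\!k)$-inseparability of $B$. Summing, $(k-1)m\le\sum_{v\in B}d_G(v)\le 2\|G\|\le 2c_0s^2|G|$. This alone yields the weak bound $\beta(G)=O(s\sqrt{|G|})$, and, more to the point, it reduces the theorem to the \emph{key claim} that a $k$-block in a $K_s$-subdivision-free graph has at least $\varepsilon(s)\,k^2$ vertices: once that is known, $k^3\le km/\varepsilon(s)\le(2c_0s^2+1)|G|/\varepsilon(s)$, and we may take $c(s)=(2c_0s^2+1)/\varepsilon(s)$.

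\medskip
For the key claim, suppose $m<\varepsilon k^2$; I would derive a $K_s$-subdivision. If $G[B]$ has at least $\tfrac14km$ edges it has average degree $\ge k/2$, hence a $K_t$-subdivision with $t=\Omega(\sqrt k)$, forcing $k=O(s^2)$---so we are done with room to spare. Otherwise at least $\tfrac14km$ edges at $B$ leave $B$, and the plan is to build a $K_s$-subdivision whose $s$ branch vertices lie in $B$ by routing the $\binom s2$ connecting paths greedily, one pair at a time: at each step the vertices that must be avoided---the $s-2$ remaining branch vertices together with all internal vertices used so far---number fewer than $k$, and since $B$ is $(<\!k)$-inseparable it stays connected after deleting them, so a fresh connecting path exists. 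The only thing to control is that the internal vertices do not accumulate past $k$ before all $\binom s2$ paths are drawn, i.e.\ that the connecting paths can be taken short; here I would use the smallness of $B$, the lower bound $k-1$ on its degrees, and---to keep the book-keeping clean---Theorem~\ref{structure theorem}(i): taking $k=\beta(G)$ so that $G$ has no $(k+1)$-block, that theorem puts $B$ inside a single bag of a \td\ of adhesion below $k$ whose torsos have all but at most $k$ vertices of degree below $2k(k-1)$, a setting in which one can bound how far apart vertices of $B$ are and hence keep the routing within budget.

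\medskip
The main obstacle is precisely this last routing step: distilling one $K_s$-subdivision from the many internally disjoint path systems that $(<\!k)$-inseparability provides around a \emph{small} block. The difficulty is congestion---the $\binom s2$ paths must be internally disjoint \emph{simultaneously} and must miss the branch vertices---so one has to be careful about the order of routing and about choosing branch vertices whose neighbourhoods are spread out, and it is in making this quantitative that the right value of $\varepsilon(s)$ gets pinned down. The remaining ingredients---the reduction to an excluded $K_s$, the degree estimate, the edge count, and the concluding arithmetic---are routine.
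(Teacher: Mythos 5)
There is a genuine gap: your \emph{key claim} --- that a $k$-block in a $K_s$-subdivision-free graph has at least $\varepsilon(s)k^2$ vertices --- is false, and the paper itself contains the counterexample. Take the planar graph of Figure~\ref{bestplanar}: an $rk\times k$ grid with $2(r+1)$ extra vertices, each joined to $k$ vertices on the perimeter. For $2(r+1)=k$ these extra vertices form a $k$-block of order exactly $k$, not $\Omega(k^2)$, while the graph is planar and hence has no $K_5$-subdivision. So for $s=5$ your claim fails for every large $k$, and your proposed routing step cannot be repaired: a block of order $k$ can draw all of its connectivity from a large planar ``hub'' (the grid), where $(<\!k)$-inseparability supplies $k$ internally disjoint paths between each \emph{single} pair of block vertices but never $\binom{s}{2}$ simultaneously internally disjoint connecting paths for $s\ge 5$. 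Your reduction via $(k-1)m\le 2\|G\|$ therefore only ever yields the weak bound $\beta(G)=O(s\sqrt{|G|})$ that you already noted, because $m$ can be as small as $k$.

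The quantity that really is $\Omega(k^2)$ per block vertex is the number of vertices of the \emph{ambient graph}, not of the block. This is what the paper proves (Theorem~\ref{planar block theorem strong}): every $k$-fan-set $X$ in $G\in\G$ forces $|G|\ge c|X|k^2$, which with $|X|\ge k$ gives the cube-root bound. The proof packs, for each $x$ in a large independent subset of $X$, a subdivided star at $x$ with about $k$ rays each of length about $r\approx \alpha k/(4d^2)$, using a maximal system of short $X$-paths as barriers and a sparsity count (Lemma~\ref{minor closed sparse}) to show that few rays are stopped early and that the stars centred at an independent set are pairwise disjoint. If you want to salvage your outline, replace the key claim by this statement about $|G|$ and aim to exhibit $\Omega(|X|)$ disjoint structures of size $\Omega(k^2)$ inside $G$, rather than trying to find a $K_s$-subdivision inside or near a small block.
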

		
		In fact, our proof of Theorem~\ref{planar block theorem} works with a slightly weaker notion of a highly connected substructure which we call a \emph{$k$-fan-set}: a set $X \sub V(G)$ for which every $x \in X$ has a set of~$k$ otherwise disjoint paths to~$X$. The maximum~$k$ for which~$G$ contains a $k$-fan-set is called the \mbox{$\infty$-\emph{admissibility}} of~$G$ (sometimes also \mbox{\emph{$\infty$-degeneracy}}~\cite{richerby}), denoted by~$\adm_{\infty}(G)$. It turns out that $k$-blocks and $k$-fan-sets are essentially interchangeable concepts.
	
	\begin{theorem} \label{fans and blocks theorem}
			For every graph~$G$
			\[
				\lfloor \frac{\adm_{\infty}(G) + 1}{2} \rfloor \leq  \beta(G) \leq \adm_{\infty}(G) .
			\]
			\end{theorem}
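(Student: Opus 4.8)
The plan is to prove the two inequalities independently; the right-hand one is the easy half. For $\beta(G)\le\adm_\infty(G)$ I would show that every $k$-block $B$ of $G$ is itself a $k$-fan-set, so that applying this to a block witnessing $\beta(G)$ gives $\adm_\infty(G)\ge\beta(G)$. Fix $x\in B$ and let $S$ be a smallest vertex set with $x\notin S$ meeting every $x$--$(B\setminus\{x\})$ path in $G$; since $S=B\setminus\{x\}$ is such a set, $|S|\le|B|-1$, and the fan version of Menger's theorem yields a fan from $x$ to $B\setminus\{x\}$ with exactly $|S|$ leaves. If $|S|\le k-2$, then as $|B\setminus\{x\}|\ge k-1>|S|$ some $y\in B\setminus\{x\}$ avoids $S$, so $S$ separates the two vertices $x,y$ of $B$ with fewer than $k$ vertices -- impossible. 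Hence $|S|\ge k-1$, and together with the one-vertex path at $x$ this is a family of $k$ paths from $x$ to $B$ that pairwise meet only in $x$; running over all $x\in B$ shows $B$ is a $k$-fan-set.

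For $\lfloor\frac{\adm_\infty(G)+1}{2}\rfloor\le\beta(G)$ I would argue by induction on $|G|$. Write $k=\adm_\infty(G)$ and $\ell=\lfloor\frac{k+1}{2}\rfloor=\lceil k/2\rceil$, and fix a $k$-fan-set $X$; note $|X|\ge k\ge\ell$. If no two vertices of $X$ can be separated by fewer than $\ell$ vertices, then $X$ is $(<\!\ell)$-inseparable and, having at least $\ell$ vertices, lies in an $\ell$-block, so $\beta(G)\ge\ell$. Otherwise pick $u,v\in X$ with $\lambda_G(u,v)$ minimum, a minimum $u$--$v$ separator $S$ (so $|S|<\ell$), and the separation $(A,B)$ with $A\cap B=S$ and $A\setminus B$ the component of $u$ in $G-S$. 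The factor $2$ enters through a robustness count: for any $x\in X\cap(A\setminus B)$, at most $|S|\le\ell-1$ of its $k-1$ otherwise disjoint paths to $X\setminus\{x\}$ can meet $S$, so at least $k-1-|S|\ge k-\ell=\lfloor k/2\rfloor$ of them stay inside $A\setminus B$ and reach $X\cap(A\setminus B)$, while the remaining ones, truncated at their first vertex of $S$, reach $S$ through otherwise disjoint paths. In particular $|X\cap(A\setminus B)|\ge 1+\lfloor k/2\rfloor\ge\ell$, and every vertex of $X\cap(A\setminus B)$ keeps a fan of size $k-1$ into $(X\cap(A\setminus B))\cup S$ inside $G[A]$.

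The crux -- and the step I expect to be genuinely delicate -- is the adhesion set $S$: a vertex of $X\cap(A\setminus B)$ keeps a full fan by the truncation above, but a vertex $s\in S$ starts only with the $|S|-1<\ell-1$ edges to the rest of $S$ in the torso $G_A:=G[A]+(\text{clique on }S)$, and one must manufacture the missing $\ge k-|S|\ge\lfloor k/2\rfloor$ otherwise disjoint $s$--$(X\cap(A\setminus B))$ paths inside $G[A]$. This is where the tightness of the chosen separation has to be used in earnest -- one wants: if some $s\in S$ were cheaply separable from $X\cap(A\setminus B)$ inside $G[A]$, this should yield an $X$-splitting separation of $G$ contradicting the minimality built into the choice of $u,v,S$ (smallest $\lambda$, then smallest $u$-side). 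Granting that $G_A$ (which has fewer vertices than $G$, since $v\in B\setminus A$) carries a $k$-fan-set, the induction hypothesis produces an $\ell$-block of $G_A$; since $S$ is a minimum $u$--$v$ separator the added clique on $S$ is ``realised'' in $G$ by paths through $B\setminus A$, which is what one needs to transport that $\ell$-block back to an $\ell$-block of $G$. (If instead one works in the plain subgraph $G[A]$ -- where block-transport is immediate because $\lambda_{G[A]}\le\lambda_G$ -- the price is that the $S$-vertices no longer receive the clique edges, so one has to recover exactly those paths anyway; either way the whole difficulty is concentrated here, and it is the inequality $k-1\ge 2(\ell-1)$, adhesion less than half the fan-width, that is being spent.)

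I would regard the $X$-side count, the arithmetic, and the block-transport lemma as routine, and the manufacture of the adhesion-vertex fans (with the precise choice of separation that keeps the constant at exactly $2$) as the entire content. As a hedge against the recursion, I would simultaneously try a direct extraction: take a maximum $(<\!\ell)$-inseparable $Y\subseteq X$; if $|Y|<\ell$, then every $x\in X\setminus Y$ is separable from some $y\in Y$ by fewer than $\ell$ vertices, and feeding the fan at $x$ into a ``closest cut'' near some fixed $y$ should force at least $\ell$ vertices of $X$ to pile up behind that cut and be pairwise $(<\!\ell)$-inseparable, again yielding an $\ell$-block.
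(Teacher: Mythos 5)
Your first inequality is fine and matches the paper's Lemma~\ref{block to fan}. The second half, however, has a genuine gap, and it sits exactly where you placed it yourself: the adhesion vertices. After choosing $u,v \in X$ with $\lambda_G(u,v)$ minimum and a minimum separator $S$, there is no reason why a vertex $s \in S$ should admit $\lfloor k/2\rfloor$ otherwise disjoint paths to $X \cap (A\setminus B)$ inside $G[A]$: $s$ need not belong to $X$, so it carries no fan hypothesis at all, and the minimality of $(u,v,S)$ only controls separators between \emph{pairs of vertices of $X$}, not between a separator vertex and the rest. So the claim that $G_A$ carries a $k$-fan-set is unproven, and without it the induction does not start. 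Moreover, the ``block transport'' step you call routine is not: a $(<\!\ell)$-inseparable set of the torso $G_A$ need not be $(<\!\ell)$-inseparable in $G$, because a separator $S'$ of $G$ may live partly in $B\setminus A$ and cut the $B$-paths that realise the clique edges on $S$; making this lifting work requires the separation to be linked in a strong sense, which is again not supplied by your choice of $S$. Your ``hedge'' at the end is likewise only a sketch. In short, the two steps on which the entire constant $2$ depends are both left open.

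The paper takes a different route that dissolves both difficulties simultaneously: it invokes a $k$-\emph{lean} tree-decomposition (which exists because $k$-atomic decompositions are $k$-lean, by Bellenbaum's proof of Thomas's theorem), proves that two vertices with $(2k-1)$-fans to a common part $V_t$ cannot be separated by fewer than $k$ vertices (Lemma~\ref{lean treedec fan block}: after deleting a $(<\!k)$-separator each fan retains $k$ surviving endvertices in $V_t$, and leanness links these two $k$-sets by $k$ disjoint paths), and then takes a leaf part of the minimal subtree of $T$ covering the $(2k-1)$-fan-set $X$. The set $W = X\cap V_t\setminus V_{t'}$ has size at least $k$ because at most $|V_t\cap V_{t'}|<k$ fan paths can escape, and truncating the fans at $V_t\cap V_{t'}$ gives every $w\in W$ a $(2k-1)$-fan to $V_t$. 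Leanness here plays precisely the role of the linkedness your argument is missing: it certifies inseparability directly in $G$, so no torso, no recursion, and no fans for adhesion vertices are ever needed. If you want to salvage your approach, you would essentially have to re-prove a linkedness statement of this strength for your chosen separation, at which point you may as well use the lean decomposition from the start.
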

			
			
			We then study the relation between block number and tree-width. It is easy to see that $\beta(G) \leq \tw(G) + 1$, so the existence of a $k$-block forces large tree-width. However, a graph can have arbitrarily large tree-width and yet have no 5-block:  $k \times k$-grids are such graphs. Since tree-width does not increase when taking minors, the tree-width of~$G$ (plus one) is even an upper bound for the block number of \emph{every minor} of~$G$. We can prove a converse to this statement, namely that a graph with large tree-width must have a minor with large block number.

		\begin{theorem}  \label{tree-width block theorem}
			Let $k \geq 1$ be an integer and~$G$ a graph. If $\tw(G) \geq 2k^2 - 2$, then some minor of~$G$ contains a $k$-block. This bound is optimal up to a constant factor.
		\end{theorem}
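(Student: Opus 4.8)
I would prove the two assertions of the statement in turn, beginning with the forcing direction. The plan is to reduce it, via Theorem~\ref{fans and blocks theorem}, to a statement about $\infty$-admissibility: since $\lfloor \frac{(2k-1)+1}{2} \rfloor = k$, it suffices to produce a minor $H$ of $G$ with $\adm_{\infty}(H) \geq 2k-1$, because then $\beta(H) \geq k$. Aiming for large $\infty$-admissibility rather than for a block directly is convenient, because a $(2k-1)$-fan-set is a ``local'' object: each of its vertices needs only its own fan of $2k-1$ otherwise disjoint paths, so such a set can be manufactured by contracting $G$, whereas a $k$-block requires pairwise control and is harder to install by hand.

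To build the minor I would start from tree-width duality~\cite{twdual}: $\tw(G) \geq 2k^2-2$ provides a bramble of order at least $2k^2-1$, and hence (up to absolute constants) a well-linked set $W \subseteq V(G)$ of order $\Theta(k^2)$. The obstruction to $W$ itself witnessing $\adm_{\infty}(G) \geq 2k-1$ is that an individual $w \in W$ may be separated from $W\setminus\{w\}$ cheaply --- essentially, $w$ may have too small a degree. The remedy is to contract: replace each $w$ by a connected subgraph $B_w$ of $G$, chosen to meet $W$ only in $w$ but to absorb enough of the routings leaving $w$, and pass to the minor obtained by contracting all the $B_w$ at once. One then wants to show, using the submodularity of the order function of separations together with the well-linkedness of $W$, that in this minor every contracted vertex has $2k-1$ otherwise disjoint paths to the (large) contracted image of $W$. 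The main obstacle --- and the place where the quadratic bound $2k^2-2$ is actually spent, in tandem with the factor $2$ already lost in Theorem~\ref{fans and blocks theorem} --- is selecting the subgraphs $B_w$ simultaneously and pairwise disjointly while still preserving enough of the linkage; a greedy or uncrossing argument that peels off one well-linked ``wing'' at a time looks like the right tool. (One might instead attempt a structural induction on $|V(G)|$ using Theorem~\ref{structure theorem}(i) --- if $G$ has no $k$-block it has a \td\ of small adhesion whose torsos have few high-degree vertices, and some torso still has tree-width $\geq 2k^2-2$ --- but turning a $k$-block-free minor of such a torso back into one of $G$ is delicate, since the adhesion cliques of a torso are realised in $G$ only up to cone vertices, so the torso is genuinely ``more connected'' than anything it came from.)

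For the optimality claim I would construct, for each $k$, a graph $G_k$ of tree-width $\Omega(k^2)$ in which no minor contains a $k$-block; together with the forcing direction this shows the threshold order is $\Theta(k^2)$, so the bound $2k^2-2$ is best possible up to the constant. By Theorem~\ref{fans and blocks theorem} it is enough to keep $\adm_{\infty}$ below $k$ on every minor of $G_k$. A grid-like graph of tree-width $\Theta(k^2)$ and bounded degree --- for instance a three-dimensional grid with two sides of length $\Theta(k)$, or a comparable Cartesian product --- is the natural candidate: its tree-width is quadratic in $k$, yet it is sparse and ``spread out'' enough that no amount of contraction can concentrate more than $O(k)$ otherwise disjoint paths at a single vertex. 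The work lies in the verification: the lower bound $\tw(G_k) = \Omega(k^2)$ is a routine balanced-separator or bramble computation, but the upper bound --- controlling $\adm_{\infty}$ under arbitrary contractions --- is the technical heart of this direction, and is where the precise shape of $G_k$ matters.
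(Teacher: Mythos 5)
Both halves of your proposal have genuine gaps. For the forcing direction, your plan is morally the paper's plan --- extract a well-linked set of order $\Theta(k^2)$, contract disjoint connected subgraphs around its vertices, and use the linkage to certify inseparability --- but the step you flag as ``the main obstacle'' (choosing the absorbing subgraphs $B_w$ simultaneously and pairwise disjointly while preserving the linkage) is exactly the content you would need to supply, and ``a greedy or uncrossing argument looks like the right tool'' is not a proof. The paper resolves this by importing the $k$-mesh lemma of Diestel, Jensen, Gorbunov and Thomassen: if $\tw(G)\geq k+m-1$ there is a separation $(A,B)$ with $A\cap B$ externally $k$-linked in $G[B]$ \emph{and} a tree $T\sub G[A]$ of maximum degree~$3$ through $A\cap B$. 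The tree is the whole point: an elementary induction splits $T$ into $p$ disjoint subtrees each containing $k$ vertices of $A\cap B$, these subtrees are the branch sets, and contracting them gives a $(<\!k)$-inseparable independent set of size $p$ directly from the external linkedness. Note also that your detour through Theorem~\ref{fans and blocks theorem} is both unnecessary and lossy (it costs a factor of~$2$); the mesh gives inseparability without passing through admissibility.

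The optimality direction is where your proposal actually breaks. Your candidate --- a three-dimensional grid with two sides of length $\Theta(k)$ --- has tree-width $\Theta(k^2)$ but $\Theta(k^3)$ (or more) vertices and edges, and it is false that none of its minors contains a $k$-block: the $k\times k\times k$ grid has a $K_m$ minor with $m=\Theta(k^{3/2})$, hence a minor containing a $\Theta(k^{3/2})$-block, so it cannot witness that tree-width $\Omega(k^2)$ is needed to force a $k$-block in a minor. The paper's argument is far simpler and avoids any analysis of $\adm_\infty$ under contraction: a $(<\!k)$-inseparable set forces at least $k(k-1)/2$ edges, and minors never gain edges, so $\max_{H\prec G}\beta(H)\leq 1+\sqrt{2e(G)}$ (inequality~(\ref{block num edges})). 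Feeding this a random graph on $n$ vertices with $2n$ edges and tree-width $\geq\gamma n$ gives tree-width linear in $n$ while every minor has block number $O(\sqrt n)$, which is exactly the quadratic separation. The lesson is that the right extremal examples are graphs whose tree-width is linear in their \emph{edge count}; any grid-like graph of tree-width $\Theta(k^2)$ necessarily has far more than $\Theta(k^2)$ edges and therefore cannot work.
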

			
		This paper is organized as follows. Section~\ref{prelims} contains a brief account of definitions, basic facts and terminology used in the rest of the paper. Theorem~\ref{structure theorem} will be proven in Section~\ref{duality}. In Section~\ref{minor-closed} we prove a strong form of Theorem~\ref{planar block theorem} about $k$-blocks in classes of graphs excluding a topological minor. Theorem~\ref{fans and blocks theorem} will be proven in Section~\ref{admissibility}. A more precise version of Theorem~\ref{tree-width block theorem}, relating tree-width to the occurrence of $k$-blocks in a minor, will be proven in Section~\ref{tree-width}. Section~\ref{conclusion} contains some remarks on how our results fit into and extend the existing body of research as well as some open problems.
		
		\end{section}
		
		\begin{section}{Preliminaries} \label{prelims}
		All graphs considered here are finite and undirected, contain neither loops nor parallel edges and will be written as $G = (V, E)$. Our notation and terminology mostly follow that of~\cite{thebook}. Any graph-theoretic terms not defined here are explained there.
		
		For $X \sub V(G)$, an \emph{$X$-path} is a path of length at least one which meets~$X$ precisely in its endvertices. For a vertex $v \in V$ and integer~$k$, a \emph{$k$-fan from~$v$} is a collection~$\Q$ of~$k$ paths which all have~$v$ as a common starting vertex and are otherwise disjoint. It is a $k$-fan \emph{to} some $U \sub V$ if the end-vertex of every path in~$\Q$ lies in~$U$. We explicitly allow a fan to contain the trivial path consisting only of the vertex~$v$ itself. The end-vertex of this path is~$v$. A set $X \sub V$ is a \emph{$k$-fan-set} if from every $x \in X$ there exists a $k$-fan to~$X$. The \mbox{\emph{$\infty$-admissibility}} $\adm_{\infty}(G)$ is the maximum~$k$ for which~$G$ contains a $k$-fan-set.
		

			\begin{lemma} \label{block to fan}
			Let~$G$ be a graph and $X \sub V(G)$ a $(<\! k)$-inseparable set of vertices for some $k \in \mathbb{N}$. Then~$X$ is a $k$-fan-set.
		\end{lemma}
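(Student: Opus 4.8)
The plan is to derive each fan from Menger's theorem. Fix a vertex $x \in X$; the task is to produce a $k$-fan from~$x$ to~$X$. Since a fan is allowed to contain the trivial path consisting of~$x$ alone, it suffices to find $k-1$ paths from~$x$ to $X \setminus \{x\}$ that pairwise meet only in~$x$: adjoining the trivial path at~$x$ then yields the desired $k$-fan. (The $k$ end-vertices are automatically distinct, since two of these paths sharing an end-vertex other than~$x$ would fail to be ``otherwise disjoint''.)

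To obtain these $k-1$ paths, I would pass to the graph~$G^*$ obtained from~$G$ by adding a new vertex~$x^*$ adjacent to every vertex of $X \setminus \{x\}$, and apply the vertex version of Menger's theorem to the non-adjacent pair $x, x^*$. This produces either $k-1$ internally disjoint $x$--$x^*$ paths in~$G^*$ --- and deleting the terminal vertex~$x^*$ from each of them leaves $k-1$ paths of~$G$ from~$x$ to $X \setminus \{x\}$ meeting pairwise only in~$x$, as required --- or else a set $S \subseteq V(G) \setminus \{x\}$ with $|S| \leq k-2$ that separates~$x$ from~$x^*$ in~$G^*$.

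The proof is finished by excluding the latter possibility. Because~$X$ is $(<\! k)$-inseparable it has at least~$k$ vertices, so $|X \setminus \{x\}| \geq k-1 > |S|$ and we may choose $y \in (X \setminus \{x\}) \setminus S$. Since~$y$ is adjacent to~$x^*$ in~$G^*$ and neither~$y$ nor~$x^*$ lies in~$S$, any $x$--$y$ path in $G - S$ would extend, via the edge~$yx^*$, to an $x$--$x^*$ path in $G^* - S$; hence~$S$ separates~$x$ from~$y$ already in~$G$. Thus~$S$ is a set of fewer than~$k$ vertices separating two vertices of~$X$, contradicting $(<\! k)$-inseparability of~$X$. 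As $x \in X$ was arbitrary, $X$ is a $k$-fan-set.

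This is essentially a routine application of Menger's theorem, so I do not anticipate a serious obstacle. The one point that requires a little care is that the separator~$S$ supplied by Menger's theorem may itself contain vertices of~$X$; this is precisely why one must count within $X \setminus \{x\}$ rather than within~$X$, in order still to locate a vertex~$y \in X$ lying on the opposite side of~$S$ from~$x$.
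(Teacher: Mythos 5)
Your argument is correct and is essentially the paper's proof: the paper invokes the fan version of Menger's theorem directly to get a $(k-1)$-fan from~$x$ to $X \setminus \{x\}$ (or a separator $S$ with $|S| < k-1$ missing some $y \in X$, contradicting inseparability since $|X| \geq k$), and your auxiliary-vertex construction with~$x^*$ is just the standard derivation of that fan version from the vertex form of Menger. The extra care you take about $S$ possibly containing vertices of~$X$ matches the paper's choice of $y \in X \setminus (S \cup \{x\})$.
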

		
		\begin{proof}
		Suppose there was an $x \in X$ with no $(k-1)$-fan from~$x$ to $X \setminus \{ x \}$. By Menger's Theorem there is a set $S \sub V \setminus \{ x \}$ with $|S| < k-1$ separating~$x$ from $X \setminus ( S \cup \{ x \} )$. Since $|X| \geq k$, there must be some $y \in X \setminus (S \cup \{ x \})$. Since~$X$ is $(<\! k)$-inseparable, $S$ cannot separate~$x$ and~$y$, a contradiction.
		\end{proof}		
		
		The converse is not true: If~$G$ is a disjoint union of cliques of order~$k$, then $V(G)$ is a $k$-fan-set, but not even a 1-block.
		
		 If $X \sub V$ is $(<\! k)$-inseparable, then every $x \in X$ has degree at least $k-1$ in~$G$. Therefore~$G$ must have at least $k(k-1)/2$ edges. Since any minor of~$G$ has at most $e(G)$ edges, it follows that 
				\begin{equation} 
		\label{block num edges}
				\max_{H \prec G} \, \beta(H) \leq 1 + \sqrt{2e(G)} .
		\end{equation}
		
		If~$T$ is a tree and $s, t \in V(T)$, we denote by $sTt$ the unique path in~$T$ from~$s$ to~$t$. Recall that a \emph{\td} of $G$ is a pair $(T,\V)$ of a tree $T$ and a family $\V=(V_t)_{t\in T}$ of vertex sets $V_t\sub V(G)$, one for every node of~$T$, such that:
\begin{enumerate}[(T1)]
\item $V(G) = \bigcup_{t\in T}V_t$,
\item for every edge $uv \in E(G)$ there exists a $t \in T$ with $\{ u, v \} \sub V_t$,
\item $V_{t_1} \cap V_{t_3} \sub V_{t_2}$ whenever $t_2 \in t_1Tt_3$.
\end{enumerate}

The sets $V_t$, $t \in T$, in a \td\ are its \emph{parts}, while the sets $V_s \cap V_t$, $st \in E(T)$, are its \emph{adhesion-sets}. For $t \in T$ the \emph{torso of~$t$} is the graph obtained from $G[V_t]$ by adding edges between any two vertices of~$V_t$ that lie in a common adhesion-set. 

The \emph{adhesion} of~$(T, \V)$ is the maximum size of an adhesion-set. The \emph{width} of $(T,\V)$ is $\max_{t\in T}(|V_t|-1)$ and the \emph{tree-width} $\tw(G)$ of~$G$ is the minimum width of any of its \td s.

	\end{section}

	\begin{section}{The structure of graphs without $k$-blocks}
		\label{duality}

	Perhaps the most trivial reason a graph~$G$ can fail to contain a $(k+1)$-block is if~$G$ has at most~$k$ vertices of degree at least~$k$. These graphs can be used as building blocks for graphs of block number at most~$k$
	
	\begin{proof}[Proof of Theorem~\ref{structure theorem}~(ii)]
		Let $(T, \V)$ be a \td\ in which every torso has at most~$k$ vertices of degree at least~$k$. Assume that~$G$ contained a $(k+1)$-block~$X$. Every adhesion-set $V_s \cap V_t$, $st \in E(T)$, is a clique in the torso of~$t$, so $|V_s \cap V_t| \leq k$ by assumption on the degrees. 
		
		Since~$X$ is a $(k+1)$-block, it follows from a standard technique that there is a $t \in T$ with $X \sub V_t$, see \cite[Lemma~12.3.4]{thebook}. We will show that every vertex of~$X$ has degree at least~$k$ in the torso of~$t$, which is a contradiction.
	
	Let $x \in X$ arbitrary and let $A \sub V_t$ be the set of all neighbors of~$x$ in the torso of~$t$. If $|A| \geq k$, we are done. Otherwise, let $y \in X \setminus (A \cup \{ x \})$. In particular, $x$ and~$y$ are non-adjacent in~$G$, so by Menger's Theorem there is a set~$\Po$ of~$k+1$ internally disjoint $x$-$y$-paths in~$G$. Since every $P \in \Po$ has both end-vertices in~$V_t$, it has a vertex $z_P \in V(P) \cap V_t \setminus \{ x \}$ which lies closest to~$x$ along~$P$. Then~$x$ and this vertex~$z_P$ must either be adjacent or lie in a common adhesion-set $V_s \cap V_t$. Hence $z_P \in A$ and, in particular, $z_P \neq y$. As the paths in~$\Po$ are internally disjoint, all these vertices~$z_P$ are distinct. Thus the degree of~$x$ in the torso of~$t$ is at least~$k+1$.
	\end{proof}

		The converse, decomposing a graph with no $(k+1)$-block into graphs of almost bounded degree, is more intricate. 
		
		The \emph{fatness} of a \td\ $(T, \V)$ of an $n$-vertex graph~$G$ is the $(n+1)$-tuple $(a_0, \ldots , a_n)$ where~$a_i$ denotes the number of parts of $(T, \V)$ of size~$n-i$. If $(T, \V)$ has lexicographically minimum fatness among all \td s of adhesion less than~$k$, we call $(T, \V)$ \emph{$k$-atomic}. We are going to show that the high-degree vertices of a torso of a $k$-atomic \td\ cannot be separated by deleting fewer than~$k$ vertices.
		
		Every edge $t_1t_2 \in E(T)$ yields a separation of~$G$ as follows. Let $T_1,T_2$ be the two components of $T - t_1t_2$, where $t_1 \in T_1$, and let $G_i := \bigcup_{t \in T_i} V_t$ for $i \in \{ 1, 2\}$. Then $(G_1, G_2)$ is a separation of~$G$ with separator $X := G_1 \cap G_2 = V_{t_1} \cap V_{t_2}$.
			
	\begin{lemma} \label{atomic stable}
		Let~$(T, \V)$ a $k$-atomic \td\ of~$G$. For any $t_1t_2 \in E(T)$, there is a component~$C$ of $G_1 - X$ such that every $x \in X$ has a neighbor in~$C$.
	\end{lemma}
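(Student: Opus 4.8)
The plan is to argue by contradiction. Suppose the conclusion fails for some edge $t_1t_2 \in E(T)$; keep the notation $T_1, T_2, G_1, G_2$ and $X = V_{t_1}\cap V_{t_2}$ from above, and note $|X| < k$ since $(T,\V)$ has adhesion less than~$k$. The goal is to build from $(T,\V)$ a \td\ of~$G$ that still has adhesion less than~$k$ but whose fatness is lexicographically smaller, contradicting $k$-atomicity. A couple of easy reductions come first. If $V_{t_1}\sub V_{t_2}$ --- in particular if $G_1 = X$, in which case $G_1 - X$ has no component at all and the statement is vacuous --- then contracting the edge $t_1t_2$ merges $V_{t_1}$ into $V_{t_2}$, producing a \td\ with one fewer part and no larger adhesion, hence smaller fatness; so we may assume $V_{t_1}\supsetneq X$. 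In particular $V_{t_1}$ meets some component of $G_1 - X$, since $V_{t_1}\setminus X \sub G_1\setminus X$ is nonempty.

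For a component $C$ of $G_1 - X$ write $X_C := N_G(C)\cap X$; by assumption $X_C \subsetneq X$ for every such $C$. The main move is to pick a component $C_1$ of $G_1 - X$ with $V_{t_1}\cap C_1 \neq \emptyset$ and to split the bag $V_{t_1}$ along the separation $(G - C_1,\ C_1\cup X_{C_1})$ of~$G$, whose separator is $X_{C_1}$: replace the node $t_1$ by an edge $t^A t^B$ carrying the bags $A := V_{t_1}\setminus C_1$ and $B := (V_{t_1}\cap C_1)\cup X_{C_1}$, and reattach every former neighbour $t'$ of $t_1$ to whichever of $t^A, t^B$ has $V_{t_1}\cap V_{t'}$ inside its bag --- with $t_2$ going to $t^A$, as $V_{t_1}\cap V_{t_2} = X \sub A$. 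This is again a \td\ of~$G$: no edge is lost, since the only $G$-neighbours of $C_1$ outside $C_1$ lie in $X_{C_1}\sub A\cap B$ (so there is no edge of~$G$ between $A\setminus B$ and $B\setminus A$), and the interpolation property survives because $A\cap B = X_{C_1}$ belongs to both new bags. All adhesion-sets of the new decomposition are either old ones or equal to $X_{C_1}\sub X$, so its adhesion is still below~$k$. Finally its fatness is strictly smaller: the single bag $V_{t_1}$ has been replaced by $A$ and $B$, both strictly smaller than $V_{t_1}$ --- the former because $V_{t_1}\cap C_1 \neq \emptyset$, the latter because $|B| = |V_{t_1}\cap C_1| + |X_{C_1}| < |V_{t_1}\cap C_1| + |X| \le |V_{t_1}|$ --- and no bag elsewhere has grown.

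The delicate point --- and, I expect, the main obstacle --- is that the split just described is legitimate only when it is \emph{clean} at every neighbour $t'$ of $t_1$, i.e.\ when $V_{t_1}\cap V_{t'}$ is contained in one of $A, B$; otherwise $t'$ cannot be reattached without breaking the interpolation property. Uncleanness occurs precisely when some bag $V_{t'}$ meets $V_{t_1}\cap C_1$ and also meets $V_{t_1}\setminus(C_1\cup X_{C_1})$, and the plan for handling it is either a preliminary rearrangement --- sliding the subtree of $T_1$ that carries $C_1$ outward until each of its bags lies inside $C_1\cup X_{C_1}$, a move that again cannot raise the fatness --- or a direct argument that such a conflicting bag is itself improvable; the choice of which component to detach must be made with this in mind. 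One clean special case is worth isolating first: if some $x^*\in X$ has no neighbour in $G_1 - X$ at all, then rather than splitting one simply removes $x^*$ from every bag in~$T_1$ and adds a single new bag equal to~$X$ adjacent to $t_2$ (to keep the edges inside $X$ covered); the result is readily checked to be a \td\ of adhesion less than~$k$ with smaller fatness. Fitting these pieces together, and choosing the component so that the split is clean, is where the real work of the proof lies.
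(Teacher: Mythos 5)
Your reductions and the clean-case bookkeeping are fine, but the argument has a genuine gap exactly where you say the real work lies, and the difficulty cannot be resolved by ``choosing the component so that the split is clean'': no such component need exist, and nothing in your proposal shows that $k$-atomicity forces one to exist. For instance, if three components $C_1,C_2,C_3$ of $G_1-X$ each meet $V_{t_1}$ in a vertex $u_i$, and $t_1$ has neighbours $t'_{ij}$ in $T_1$ with $V_{t_1}\cap V_{t'_{ij}}=\{u_i,u_j\}$, then for every choice of $C_i$ some adhesion set contains a vertex of $V_{t_1}\cap C_i$ and a vertex of $V_{t_1}\setminus(C_i\cup X_{C_i})$, so every single-bag split is unclean. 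Your fallback --- sliding the subtree carrying $C_1$ outward until its bags lie in $C_1\cup X_{C_1}$ --- is the right instinct, but carried to its conclusion it is no longer a local modification of one bag. The paper's proof does this globally from the start: it replaces the whole of $T_1$ by one copy $T_1^i$ per component $C_i$ of $G_1-X$, sets $V'_{t^i}:=V_t\cap(C_i\cup N(C_i))$ for every $t\in T_1$, and attaches all copies of $t_1$ to $t_2$. Reattachment conflicts then never arise.

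Once the construction is global, the fatness comparison is no longer the one-line ``both new bags are strictly smaller'': a bag $V_t$ deep inside $T_1$ with $V_t\sub C_i\cup N(C_i)$ keeps its full size in the $i$-th copy. The paper handles this by observing that a bag can retain its size in at most one copy and drops below $|X|$ in all others (distinct sets $C_i\cup N(C_i)$ meet only inside $X$, and $N(C_i)\subsetneq X$ by the contradiction hypothesis), and then selecting, among the nodes of $T_1$ whose bag shrinks in \emph{every} copy, one of maximum bag size to witness the lexicographic decrease; the node $t_1$ itself qualifies precisely because no component sees all of $X$. This duplication step and the refined fatness count are the substance of the proof and are absent from your proposal, so as written the argument does not go through.
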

	
	\begin{proof}
		Suppose this was not the case. Let $C_1, \ldots, C_m$ be the components of $G_1 - X$. Obtain the tree~$T'$ from the disjoint union of~$m$ copies $T_1^1, \ldots, T_1^m$ of~$T_1$, where each $t \in T_1$ corresponds to~$m$ vertices $t^i \in T_1^i$ for $i \in [m]$, and one copy of~$T_2$ by joining~$t_2$ to every~$t_1^i$, $i \in [m]$. For $t \in T_2$ let $V_t' = V_t$, for $t \in T_1$ let $V_{t^i}' = V_t \cap (C_i \cup N(C_i))$. Observe that the adhesion of~$(T', \V')$ is less than~$k$. Let the fatness of $(T, \V)$ be $a = (a_i)_i$ and let the fatness of $(T', \V')$ be $a' = (a'_i)_i$.
		
		Clearly $|V_{t^i}'| \leq |V_t|$ for every $t \in T_1$. If $|V_{t^i}'| = |V_t|$ for some $i \in [m]$, then $V_t \sub C_i \cup N(C_i)$ and for all $j \neq i$ we have $|V_{t^j}'| \leq |N(C_i)| < |X|$. Choose $t \in T_1$ with $r := |G| - |V_t|$ minimum under the condition that there is no~$i$ with $|V_{t^i}| = |V_t|$. Since $N(C_i) \subsetneq X$ for every $i \in [m]$, the node $t_1$ satisfies this condition. Thus $r \leq |G| - |V_{t_1}| \leq |G| - |X|$. Then $a_s = a'_s$ for all $s < r$ and $a_r > a'_r$, so that~$a'$ is lexicographically smaller than~$a$, a contradiction. 
	\end{proof}
			
	This lemma helps us use the assumption of large degree in a torso.

	\begin{lemma} \label{torso degree to fan}
		Let~$(T, \V)$ a $k$-atomic \td\ of~$G$ for $k \geq 3$. Let $m \geq 1$ be an integer, $t \in T$ and $x \in V_t$. If~$x$ has degree at least $(m-1)(k-2)$ in the torso of~$t$, then there exists an $m$-fan from~$x$ to~$V_t$.
	\end{lemma}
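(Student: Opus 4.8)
The plan is to build the required $m$-fan explicitly: it will consist of the trivial path at $x$ together with $m-1$ further paths from $x$, ending in $V_t\setminus\{x\}$ and pairwise disjoint except at~$x$. There are two kinds of ``resources'' for such paths. First, every neighbour $v$ of~$x$ in~$G$ with $v\in V_t$ yields the length-one path $xv$. Second, for a neighbour~$s$ of~$t$ in~$T$ with $x\in X_s:=V_s\cap V_t$, write $G_1$ for the union of the parts on the $s$-side of the edge $st$ and $B_s:=G_1\setminus X_s$; a routine consequence of (T3) is that the sets $B_s$, for distinct~$s$, are pairwise disjoint and disjoint from~$V_t$. Since $(T,\V)$ is $k$-atomic it has adhesion less than~$k$, so Lemma~\ref{atomic stable}, applied to the edge $st$, furnishes a component $C_s$ of $B_s$ in which \emph{every} vertex of~$X_s$ has a neighbour. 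As $x$ and every $w\in X_s\setminus\{x\}$ have a neighbour in the connected set~$C_s$, for each such~$w$ there is a path from~$x$ to~$w$ with all interior vertices in~$C_s$; but since these all run through~$C_s$, at most one of them can be used for a given~$s$.

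This reduces the problem to a counting statement. Let $S$ be the set of neighbours~$s$ of~$t$ in~$T$ with $x\in X_s$, and let $\A$ be the family consisting of the sets $A_s:=X_s\setminus\{x\}$ for $s\in S$ together with a singleton $\{v\}$ for each neighbour~$v$ of~$x$ in $G[V_t]$. Every member of $\A$ has size at most $k-2$ (adhesion less than~$k$ for the $A_s$; $k\ge 3$ for the singletons), while $\bigcup\A$ is exactly the neighbourhood of $x$ in the torso of~$t$ (using that two vertices of $V_t$ lie in a common adhesion-set iff they lie in $V_s\cap V_t$ for some neighbour~$s$ of~$t$), which by hypothesis has size at least $(m-1)(k-2)$. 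It therefore suffices to find a partial system of distinct representatives of size $m-1$ for $\A$: a chosen member $\{v\}$ is served by the edge $xv$, a chosen $A_s$ with representative $w$ by a path from~$x$ through~$C_s$ to~$w$; distinct members use distinct resources, the paths through different $C_s$ are disjoint and avoid $V_t$ except at their endpoints, and the representatives are distinct and lie in $V_t\setminus\{x\}$ — so together with the trivial path at~$x$ these are $m$ paths forming an $m$-fan from $x$ to~$V_t$.

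It remains to prove the combinatorial fact: \emph{a finite family $(A_i)_{i\in I}$ of sets, each of size at most an integer $q\ge 1$, whose union has size at least $(m-1)q$, has a partial system of distinct representatives of size at least $m-1$.} This I would deduce from the defect form of Hall's theorem, which says the maximum size of a partial SDR equals $|I|-\max_{J\sub I}\bigl(|J|-|\bigcup_{j\in J}A_j|\bigr)$. If this maximum were at most $m-2$, choose~$J$ attaining the defect; then $|\bigcup_{j\in J}A_j|\le |J|-|I|+m-2$, and non-negativity of the left side forces $d:=|I|-|J|\le m-2$. The $d$ sets outside~$J$ cover at most $dq$ further elements, so the whole union has size at most $(|J|-|I|+m-2)+dq=(m-2)+d(q-1)\le (m-2)q<(m-1)q$, a contradiction.

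The delicate point — and the reason the hypothesis reads $(m-1)(k-2)$ and not less — is precisely this last chain of inequalities: one adhesion-branch can be forced to contribute only a single path to the fan, so a large torso-degree at~$x$ must be spread over enough branches (or realised by genuine edges of~$G$), and the Hall-type estimate is exactly what turns ``degree at least $(m-1)(k-2)$'' into ``$m-1$ otherwise-disjoint paths''. Once the representatives are in hand, checking that the assembled paths meet only in~$x$ is routine, relying only on the disjointness of the $B_s$ and on $B_s\cap V_t=\emptyset$.
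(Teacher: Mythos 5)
Your proof is correct and follows essentially the same route as the paper's: one path per adhesion branch, routed through the component supplied by Lemma~\ref{atomic stable}, plus direct edges to the genuine neighbours of~$x$ in~$V_t$ and the trivial path, with the count resting on the fact that each branch contributes at most $k-2$ to the torso-degree but only one path to the fan. The only cosmetic difference is that you organise the count via the defect form of Hall's theorem, whereas the paper takes a minimal subfamily of adhesion sets covering the non-neighbour torso-neighbours (each then having a private representative) and counts directly.
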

				
			\begin{proof}
	Let~$A$ be the set of vertices of~$V_t$ which are adjacent to~$x$ in~$G$ and let~$B$ be the set of vertices that are adjacent to~$x$ in the torso of~$t$, but not in~$G$. Let~$\Q_A$ be the fan consisting of the trivial path~$\{ x \}$ and single edges to each $a \in A$. We now construct a fan~$\Q_B$ from~$x$ to~$B$ consisting of $V_t$-paths. 
			
		For every $b \in B$ there is an edge $st \in E(T)$ with $\{ b, x \} \sub V_s \cap V_t$. Let~$R$ be the set of all neighbors~$s$ of~$t$ in~$T$ with $x \in V_s$. Let $S \sub R$ minimal such that $B \sub \bigcup_{s \in S} V_s$.  For $s \in S$, let $B_s := B \cap V_s$. By minimality of~$S$, every~$B_s$ contains some vertex $b_s \notin \bigcup_{s' \neq s} B_{s'}$. 
		
		Let~$T_s$ be the component of $T-st$ containing~$s$ and $G_s := \bigcup_{r \in T_s} V_r$. By Lemma~\ref{atomic stable}, there is a component of $G_s - V_t$ that contains neighbors of both~$x$ and~$b_s$. We therefore find a path~$P_s$ from~$x$ to~$b_s$ in~$G_s$ that meets~$V_t$ only in its endpoints. The set $\Q_B := \{ P_s \colon s \in S \}$ is an $|S|$-fan from~$x$ to~$B$ in which every path is internally disjoint from~$V_t$. Thus $\Q := \Q_A \cup \Q_B$ is a fan from~$x$ to~$V_t$.
		
		It remains to show $|\Q| \geq m$. As $B \sub \bigcup_s B_s$, we have
		\[
			|A| + \sum_{s \in S} |B_s| \geq |A| + |B| \geq (m-1)(k-2) .
		\]
		Note that $x \in V_s \cap V_t$ for every $s \in S$ and $B_s \sub V_s \cap V_t \setminus \{ x \}$. Since~$(T, \V)$ has adhesion less than~$k$, it follows that $|B_s| \leq k-2$. Therefore
		\[
		|\Q| = 1 + |A| + |S| \geq 1 + \frac{|A| + (k-2)|S|}{k-2} \geq m .
		\]
	\end{proof}
				
		A \td\ is \emph{$k$-lean} if it has adhesion less than~$k$ and for any $s, t \in T$, not necessarily distinct, and any $A \sub V_s$, $B \sub V_t$ with $|A| = |B| \leq k$ either there is a set of~$|A|$ disjoint $A$-$B$-paths in~$G$ or there is an edge $uw \in E(sTt)$ with $|V_u \cap V_w| < |A|$. As observed in~\cite{forceblock}, the short proof of Thomas' theorem~\cite{thomas} given in~\cite{bellenbaum} in fact shows the following.
				
	\begin{theorem}[\cite{bellenbaum}] \label{atomic lean}
			Every $k$-atomic \td\ is $k$-lean.
		\end{theorem}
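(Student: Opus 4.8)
The plan is to argue by contradiction and to reuse, essentially verbatim, the proof of Thomas' theorem from~\cite{bellenbaum}, with two substitutions: where that proof fixes a \td\ of minimum \emph{width} and derives a contradiction to minimality of its (sorted) sequence of part sizes, I would fix a \td\ of \emph{adhesion less than~$k$} --- and, among those, one of lexicographically least fatness, i.e.\ a $k$-atomic \td\ --- and derive a contradiction to minimality of its fatness. So suppose some $k$-atomic \td\ $(T,\V)$ is not $k$-lean. Its adhesion is already less than~$k$, so the failure must come from the other clause: there are $s,t\in T$ and $A\sub V_s$, $B\sub V_t$ with $|A|=|B|=\ell\le k$ such that~$G$ has fewer than~$\ell$ disjoint $A$-$B$-paths while every edge $uw$ on~$sTt$ satisfies $|V_u\cap V_w|\ge\ell$.

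Among all such bad quadruples I would choose one with~$\ell$ minimum and then with~$sTt$ shortest (the case $s=t$ being the instance in which this path has no edges, which the construction below still covers). By Menger's theorem there is an $A$-$B$-separator~$S$ with $|S|<\ell\le k$; since $|S|<|A|=|B|$, neither~$A$ nor~$B$ lies inside~$S$. Let $U_1$ be the union of~$S$ with all components of $G-S$ meeting $A\setminus S$, and $U_2$ the union of~$S$ with the remaining components, so that $U_1\cup U_2=V(G)$, $U_1\cap U_2=S$, $A\sub U_1$ and $B\sub U_2$ (one may also take~$S$ nested with the separations that the edges of~$sTt$ induce on~$G$, so that the surgery below respects (T1)--(T3)).

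Now I would perform the re-gluing of~\cite{bellenbaum}: split~$T$ along the path~$sTt$ into a ``$U_1$-side'' and a ``$U_2$-side'', restricting every affected part~$V_r$ to $V_r\cap U_1$ on one side and to $V_r\cap U_2$ on the other, and re-linking the two sides through a fresh part equal to~$S$, distributing the branches hanging off~$sTt$ accordingly. One then checks three things: that the resulting pair $(T',\V')$ is a \td\ of~$G$; that its adhesion-sets are old adhesion-sets, subsets of old ones, or copies of~$S$, hence all of size less than~$k$, so that its adhesion is still less than~$k$; and that its fatness is strictly smaller than that of $(T,\V)$, because the surgery replaces the affected parts (those on~$sTt$, and the branch-parts meeting both sides of~$S$) by strictly smaller parts while creating nothing larger than the largest of them --- this is where $|S|<\ell$ and $A,B\not\sub S$ are used to guarantee the strict decrease. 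This contradicts the $k$-atomicity of $(T,\V)$, completing the proof.

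What makes this more than a bare citation is the fidelity of the transfer, and that is the step I expect to be the main obstacle. In Thomas' setting the surgery only has to preserve an upper bound on part sizes, so that the width does not go up; here I must verify instead that it keeps the adhesion strictly below~$k$ \emph{and} that it genuinely lowers the full fatness vector, not merely the maximum part size. These are precisely the two finer features of the Bellenbaum--Diestel construction that~\cite{forceblock} records as already implicit in its proof, and checking them amounts to tracking, part by part and adhesion-set by adhesion-set along the whole of~$sTt$, the effect of intersecting with~$U_i$ and inserting copies of~$S$. This is the same type of bookkeeping already carried out in the proof of Lemma~\ref{atomic stable}, where a single tree-edge is replaced by a small family of thinner ones; here it is iterated along a path and combined with a Menger separator, but I expect it to remain routine rather than conceptually new.
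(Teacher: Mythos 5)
Your proposal is correct and is essentially the route the paper itself takes: the paper offers no proof of this theorem beyond the observation (credited to the earlier work on forcing blocks by degree) that Bellenbaum's short proof of Thomas' theorem goes through once one minimizes fatness over \td s of adhesion less than~$k$ instead of over \td s of minimum width. You have identified exactly the two points that must be re-verified in the transfer --- that the surgery keeps every adhesion-set of size less than~$k$ and that it strictly lowers the fatness vector --- which is precisely what the paper asserts is implicit in that proof.
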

		
			\begin{lemma} \label{lean treedec fan block}
		Let~$(T, \V)$ be a $k$-lean tree-decomposition of~$G$, $t \in T$ and $u, v \in V(G)$. If from both~$u$ and~$v$ there are $(2k-1)$-fans to~$V_t$, then~$u$ and~$v$ cannot be separated by deleting fewer than~$k$ vertices.
	\end{lemma}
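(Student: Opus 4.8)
The plan is to argue by contradiction: assume that some set $S \sub V(G) \setminus \{u, v\}$ with $|S| \le k-1$ separates $u$ from $v$ in $G$, and derive a contradiction by exhibiting a walk from $u$ to $v$ all of whose vertices lie outside~$S$.

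First I would use that the fans are large compared with $|S|$. Fix a $(2k-1)$-fan from~$u$ to~$V_t$. Its paths pairwise intersect only in~$u$, and $u \notin S$, so each vertex of~$S$ lies on at most one of these $2k-1$ paths; hence at most $k-1$ of them meet~$S$, and so at least~$k$ of them avoid~$S$ altogether. As these surviving paths are otherwise disjoint, their endvertices in~$V_t$ are distinct, so we obtain a set $A \sub V_t$ with $|A| = k$ such that every $a \in A$ is joined to~$u$ by a path of~$G$ avoiding~$S$. Running the same argument for~$v$ yields a set $B \sub V_t$ with $|B| = k$ such that every $b \in B$ is joined to~$v$ by a path avoiding~$S$.

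Next I would feed $A$ and~$B$ into the leanness hypothesis, taking both nodes of the pair in the definition of $k$-leanness to be~$t$ (which is permitted, as $s$ and~$t$ need not be distinct). The path $tTt$ consists of the single node~$t$ and hence has no edges, so the alternative in the definition of $k$-leanness --- an edge of $tTt$ across which the adhesion-set is too small --- cannot occur. Therefore there is a family $\Po$ of~$k$ pairwise disjoint $A$-$B$-paths in~$G$. Since these paths are pairwise disjoint and $|S| \le k-1$, at least one of them, say a path~$P$ from $a \in A$ to $b \in B$, avoids~$S$. Concatenating the $u$-$a$ path, then~$P$, then the $b$-$v$ path produces a walk from~$u$ to~$v$ whose vertex set is disjoint from~$S$; thus $u$ and~$v$ lie in the same component of $G - S$, contradicting the choice of~$S$.

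The crux is the first step, where the parameter $2k-1$ is calibrated exactly so that deleting any $k-1$ vertices still leaves~$k$ usable fan-paths on each side --- which in turn is what makes the leanness hypothesis applicable with sets of size~$k$ --- together with the easy but essential observation that invoking $k$-leanness inside a single part is never obstructed, because the relevant tree-path is edgeless. Beyond this bookkeeping I expect no real difficulty; one minor point to keep in mind is that $A$ and~$B$ need not be disjoint, so some path of~$\Po$ (and possibly the chosen~$P$) may be a single vertex, which does no harm.
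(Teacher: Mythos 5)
Your proof is correct and follows essentially the same route as the paper's: prune each $(2k-1)$-fan to $k$ paths avoiding the putative separator~$S$, apply $k$-leanness with $s = t$ to the two resulting $k$-sets of endvertices in~$V_t$, and observe that the $k$ disjoint linking paths cannot all be blocked by~$S$. The paper phrases the final contradiction as ``all $k$ disjoint paths must pass through~$S$'' rather than exhibiting a surviving walk, but this is the same argument.
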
			
	
	\begin{proof}
		Suppose there was some $S \sub V(G) \setminus \{ u, v \}$, $|S| < k$, separating~$u$ and~$v$. We find a set of~$k$ paths of the fan from~$u$ to~$V_t$ which are disjoint from~$S$ and let $R_u \sub V_t$ be their endvertices. Note that all vertices in~$R_u$ lie in the component of $G - S$ containing~$u$. Define $R_v \sub V_t$ similarly for~$v$. 
		
		Since $(T, \V)$ is $k$-lean, we find~$k$ vertex-disjoint paths from~$R_u$ to~$R_v $. All of these paths must pass through~$S$, a contradiction.
	\end{proof}
	
	We now combine all this to complete the proof of Theorem~\ref{structure theorem}.
	
		\begin{proof}[Proof of Theorem~\ref{structure theorem}~(i)]
	Let~$(T, \V)$ be a $(k+1)$-atomic \td\ of~$G$. For $t \in T$ let $X_t \sub V_t$ be the set of vertices of degree at least $2k(k-1)$ in the torso of~$t$. By Lemma~\ref{torso degree to fan}, every $x \in X_t$ has a $(2k+1)$-fan to~$V_t$. By Lemma~\ref{lean treedec fan block}, no two vertices of~$X_t$ can be separated by deleting fewer than~$k+1$ vertices. Since~$G$ has no $(k+1)$-block, it follows that $|X_t| \leq  k$.
	\end{proof}

	\end{section}

			\begin{section}{Excluded topological minors and $k$-blocks}
		\label{minor-closed}
		
		When considering $k$-blocks, the topological minor relation is more natural than the ordinary minor relation. For example, it is easy to see that a $(k+1)$-block in a graph~$H$ yields a $(< \! k)$-inseparable set in any graph containing~$H$ as a topological minor. No such statement is true when considering minors: It is easy to construct a triangle-free graph~$G$ of maximum degree~3 that contains the complete graph of order~$k$ as a minor. This graph~$G$ has no 4-block.

		In this section we study the block number of graphs from classes of graphs~$\G$ that exclude some fixed graph as a topological minor. Examples of such classes include graphs of bounded genus, bounded tree-width or bounded degree. In general, there exists no upper bound on the block number of graphs in~$\G$. In fact, we can explicitly describe a planar graph with block number~$k$: take a rectangular $rk \times k$-grid, add $2(r+1)$ vertices to the outer face and join each of these to~$k$ vertices on the perimeter of the grid (see Figure~\ref{bestplanar}). If $2(r+1) \geq k$, these new vertices are $(<\! k)$-inseparable.
		
		\begin{figure}[ht]
			\begin{center}
			\includegraphics[width=6cm]{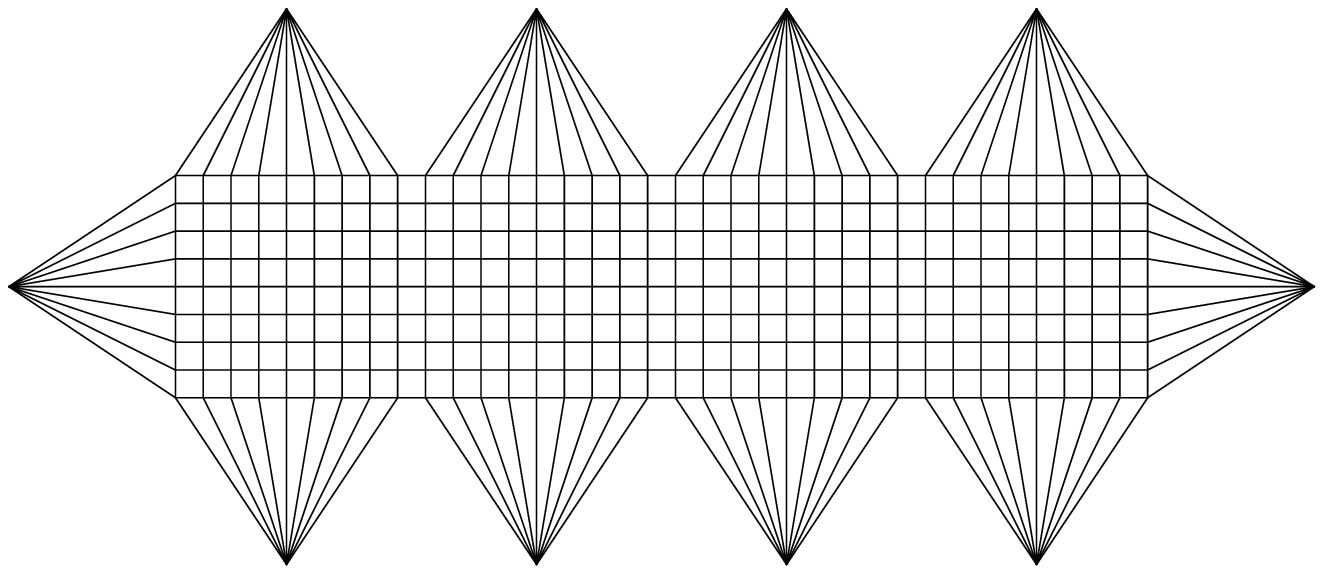}
			\end{center}
			\caption{A planar graph with a 9-block of order~10.}
			\label{bestplanar}
		\end{figure}
		
		We are thus faced with two tasks: First, to characterize those classes for which there exists an upper bound on the block number. Second, to obtain a relative upper bound on the block number of graphs in~$\G$ when no absolute upper bound exists.

%
%

	\begin{subsection}{The bounded case}
			As indicated in the introduction, Dvo\v{r}\'{a}k~\cite{dvorak} implicitly characterized the classes for which there exists an upper bound on the block number. Since $k$-blocks are not mentioned in~\cite{dvorak}, we make this characterization explicit here without adding any ideas not present in~\cite{dvorak}.

	A small modification of the graph depicted in Figure~\ref{bestplanar} yields a planar graph~$H_k$ with roughly~$k^3/2$ vertices and block number~$k$ which can be drawn in the plane so that every vertex of degree greater than~3 lies on the outer face: Essentially, replace the square grid by a hexagonal grid and join the `new' vertices only to degree-2 vertices on the perimeter.			
			
			Suppose that~$H$ is a graph with the property that every graph~$G$ that does not contain~$H$ as a topological minor satisfies $\beta(G) < s$ for some constant $s = s(H)$. Then~$H$ is a topological minor of~$H_s$ and therefore planar. Moreover, $H$ ``inherits'' a drawing in the plane in which all vertices of degree greater than~3 lie on the outer face. 
			
			The simplest case of a deep structure theorem for graphs excluding a fixed graph as a topological minor~\cite[Theorem~3]{dvorak} asserts a converse to this in a strong form.

%
%
						
					\begin{theorem}[\cite{dvorak}] \label{dvorak theorem}
				Let~$H$ be a graph drawn in the plane so that every vertex of degree greater than~3 lies on the outer face. Then there exists an $r = r(H)$ such that every graph that does not contain~$H$ as a topological minor has a \td\ in which every torso contains at most~$r$ vertices of degree at least~$r$.			
			\end{theorem}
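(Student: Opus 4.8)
The plan is to derive this theorem from the deep structure theorem of Dvo\v{r}\'{a}k~\cite{dvorak} (building on earlier work of Grohe and Marx), of which it is the special case alluded to above. In a convenient form, that theorem supplies a constant $a = a(H)$ such that every graph~$G$ with no $H$-subdivision has a \td\ $(T, \V)$ of adhesion at most~$a$ in which, for each $t \in T$, the torso~$G_t$ either has at most~$a$ vertices of degree at least~$a$ --- in which case this torso already obeys the desired conclusion --- or has a set~$A_t$ of at most~$a$ apex vertices such that $G_t - A_t$ is $a$-nearly embeddable in a surface of Euler genus at most~$a$. So only the near-embeddable torsos need attention, and the task is to convert their structure, using the hypotheses on~$H$, into the crude statement that they too contain boundedly many high-degree vertices.

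\emph{First reduction: it suffices to bound the tree-width of the near-embeddable torsos.} Suppose we knew that $\tw(G_t) \le N$ for every near-embeddable torso, where $N = N(H) \ge a$. Replace each such node~$t$ by a \td\ of~$G_t$ of width at most~$N$. Since every adhesion-set of $(T, \V)$ is a clique of size $\le a \le N+1$ in the torsos on either side, it lies inside a single part of this \td, so the pieces re-attach along common parts into a \td\ of~$G$ --- the standard operation of refining a \td\ along its torsos. Every new part coming from a former near-embeddable torso has at most $N+1$ vertices, while the untouched torsos still have at most~$a$ vertices of degree $\ge a$; hence every torso of the refined \td\ has at most $r := \max\{a,\, N+1\}$ vertices of degree $\ge r$, which is the assertion of the theorem.

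\emph{Second step: a near-embeddable torso of large tree-width yields an $H$-subdivision.} It remains to rule out $\tw(G_t) > N(H)$ for a near-embeddable torso. Deleting~$A_t$ lowers the tree-width by at most~$a$, so $G_t - A_t$ still has enormous tree-width; being $a$-nearly embedded in a bounded-genus surface, it therefore contains, up to subdivision, a large \emph{flat} elementary wall~$W_m$ --- one drawn in a disc of the surface, disjoint from the handles and from the bounded number of bounded-depth vortices --- by the familiar fact that such graphs of large tree-width contain large flat walls. Now the hypothesis on~$H$ enters: suppressing the vertices of~$H$ of degree greater than~$3$ leaves a plane graph of maximum degree~$3$, a subdivision of which can be routed through the interior of~$W_m$ once~$m$ is large; and because the suppressed vertices all lay on the outer face of~$H$, the points along the boundary cycle of~$W_m$ at which they have to be re-attached can be made to occur in a feasible cyclic order, so the at most $|V(H)|$ high-degree branch vertices of~$H$ can be realised by the apices in~$A_t$ together with the linear societies carried by the vortices of~$G_t$. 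This produces an $H$-subdivision in the torso~$G_t$, and re-expanding the virtual edges it uses --- each through a separating adhesion-set of size $\le a$ --- yields an $H$-subdivision in~$G$, contrary to the choice of~$G$.

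\emph{Main obstacle.} The crux is the routing in the second step: realising an arbitrary planar graph~$H$ whose branch vertices are pinned to a single face inside a large flat wall augmented by a bounded apex/vortex structure, and in particular arranging that the high-degree branch vertices of~$H$ get mapped to vertices (apices, or vertices inside a vortex) that reach the right stretch of the wall boundary in the right cyclic order. This is exactly where the hypothesis on~$H$ is indispensable, and it is the technical heart of Dvo\v{r}\'{a}k's argument; by contrast, the first reduction, the extraction of a flat wall, and the lifting of subdivisions from torsos to~$G$ are all routine. I would expect this routing to rest on a planar lemma of the shape: for every plane graph~$H$ whose $\ge 4$-valent vertices lie on the outer face, there is an~$m$ such that the wall~$W_m$, together with $|V(H)|$ extra vertices each joined to a prescribed interval of boundary vertices of~$W_m$, contains a subdivision of~$H$.
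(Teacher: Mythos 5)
First, a point of reference: the paper does not prove this statement at all --- it is quoted as (a special case of) Dvo\v{r}\'{a}k's structure theorem \cite[Theorem~3]{dvorak}, so there is no in-paper argument to compare yours against; you are effectively being asked to reprove a deep external result. Judged on its own terms, your outline has a concrete flaw beyond the (acknowledged) deferral of the routing step. Your first reduction aims to bound $\tw(G_t)$ for every nearly embeddable torso, and your second step claims that a nearly embeddable torso of large tree-width contains an $H$-subdivision. Both fail: take $H = K_{1,4}$ (or any planar $H$ with a vertex of degree at least~$4$, drawn with that vertex on the outer face) and let $G$ be a huge wall. Then $G$ is planar, hence nearly embeddable with no apices and no vortices, has unbounded tree-width, and contains no $H$-subdivision because $\Delta(G) \le 3 < \Delta(H)$; yet the theorem's conclusion holds for $G$ trivially, since the one-part decomposition has no vertex of degree at least~$4$ in its torso. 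So the tree-width of the torsos is not what can, or needs to, be bounded: the conclusion of the theorem bounds only the number of high-degree vertices per torso, and a flat wall on its own can never supply the branch vertices of degree at least~$4$ that an $H$-subdivision requires.

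The missing idea is that Dvo\v{r}\'{a}k's theorem is ``stronger'' than the Grohe--Marx decomposition precisely in that it controls where the high-degree vertices of a nearly embeddable torso sit (essentially: apart from a bounded exceptional set, they are confined to the apices and vortices). The argument you would need therefore runs in the opposite direction from your sketch: assume some torso has more than~$r$ vertices of degree at least~$r$, use the near-embedding to find a large flat wall \emph{together with} many high-degree vertices attached around a common face in a controlled cyclic order, and only then route an $H$-subdivision, placing the $\ge 4$-valent branch vertices of~$H$ on those attached high-degree vertices --- this is where the hypothesis that they lie on the outer face of~$H$ enters. That routing lemma is the entire content of the theorem, and your proposal explicitly leaves it as an expectation rather than a proof; combined with the false tree-width reduction above, the attempt does not constitute a proof.
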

			
	It is now easy to characterize the graphs whose exclusion as a topological minor bounds the block number.
			
			\begin{corollary}
					Let~$H$ be a graph. The following are equivalent:
					\begin{enumerate}[(i)]
						\item There is an integer $s = s(H)$ such that every graph~$G$ that does not contain~$H$ as a topological minor satisfies $\beta(G) < s$.
						\item $H$ can be drawn in the plane such that
						 every vertex of degree greater than~3 lies on the outer face.
					\end{enumerate}
				\end{corollary}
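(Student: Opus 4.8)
The plan is to prove the two implications separately, using the planar graphs~$H_k$ constructed above for (i)$\Rightarrow$(ii) and combining Dvo\v{r}\'{a}k's structure theorem (Theorem~\ref{dvorak theorem}) with Theorem~\ref{structure theorem}~(ii) for (ii)$\Rightarrow$(i).

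For (i)$\Rightarrow$(ii), suppose such an~$s = s(H)$ exists and consider the graph~$H_s$ described above: it is planar, has block number~$s$, and admits a plane drawing in which every vertex of degree greater than~$3$ lies on the outer face. Since $\beta(H_s) = s \not< s$, the contrapositive of the hypothesis forces~$H_s$ to contain~$H$ as a topological minor. It then remains to observe that~$H$ inherits a drawing with the required property: if~$H'$ is a subdivision of~$H$ that is a subgraph of~$H_s$, then the drawing of~$H_s$ restricts to a plane drawing of~$H'$; a vertex~$v$ of~$H$ of degree greater than~$3$ appears in~$H'$ as a branch vertex of the same degree, hence has degree greater than~$3$ in~$H_s$ and so lies on the outer face there; passing to the subgraph~$H'$ only enlarges the unbounded face, so~$v$ remains incident with it, and suppressing the subdividing vertices of degree~$2$ preserves this. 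Thus~$H$ can be drawn as in~(ii).

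For (ii)$\Rightarrow$(i), assume~$H$ has such a drawing. By Theorem~\ref{dvorak theorem} there is an $r = r(H)$ such that every graph not containing~$H$ as a topological minor has a \td\ in which every torso has at most~$r$ vertices of degree at least~$r$. Set $k := \max(r, 2)$; then in any such \td\ every torso has at most~$k$ vertices of degree at least~$k$, since the vertices of degree $\geq k$ form a subset of the vertices of degree $\geq r$, of which there are at most $r \leq k$. By Theorem~\ref{structure theorem}~(ii) no such graph has a $(k+1)$-block, so~$s := k+1$ witnesses~(i).

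The only genuinely delicate point is the inheritance of the drawing in (i)$\Rightarrow$(ii), and even this was already used informally in the discussion preceding the corollary; the remainder is a direct assembly of Theorems~\ref{structure theorem} and~\ref{dvorak theorem}. It should be stressed that the argument relies on using the ``hexagonal'' variant~$H_k$ rather than the graph of Figure~\ref{bestplanar}, precisely because in~$H_k$ all vertices of degree greater than~$3$ lie on the outer face, which is exactly what makes the inheritance of the drawing go through.
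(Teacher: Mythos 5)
Your proof is correct and follows exactly the same route as the paper's (very terse) proof: for (i)$\Rightarrow$(ii) you use that $\beta(H_s)=s$ forces $H_s$ to contain $H$ as a topological minor and that $H$ inherits the drawing, and for (ii)$\Rightarrow$(i) you combine Theorem~\ref{dvorak theorem} with Theorem~\ref{structure theorem}~(ii). You merely fill in details the paper leaves implicit, such as the inheritance of the outer-face property by branch vertices and the adjustment $k:=\max(r,2)$ needed to meet the hypothesis of Theorem~\ref{structure theorem}.
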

						
					\begin{proof}
				(i) $\rightarrow$ (ii): By assumption, the graph~$H_s$ contains~$H$ as a topological minor. The desired drawing of~$H$ can then be obtained from the drawing of~$H_s$.
				
				(ii) $\rightarrow$ (i): By Theorem~\ref{dvorak theorem} and Theorem~\ref{structure theorem}~(ii).
				\end{proof}

	Note that every graph that contains~$H_k$ as a topological minor necessarily has a $k$-block. Theorem~\ref{dvorak theorem} thereby implies a qualitative version of Theorem~\ref{structure theorem}~(i), but without explicit bounds.			
				
				\begin{corollary} \label{class bounded char}
					Let~$\G$ be a class of graphs. The following are equivalent:
\begin{enumerate}[(1)]
	\item There is a $k \in \mathbb{N}$ such that $\beta(G) \leq k$ for every $G \in \G$.
	\item There is an $m \in \mathbb{N}$ such that no $G \in \G$ contains~$H_m$ as a topological minor.
	\item There is an $r \in \mathbb{N}$ such that every graph in~$\G$ has a \td\ in which every torso has at most~$r$ vertices of degree at least~$r$.
\end{enumerate}				
				\end{corollary}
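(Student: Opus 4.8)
The plan is to establish the cyclic chain of implications $(1) \Rightarrow (2) \Rightarrow (3) \Rightarrow (1)$, each step being a direct application of a result that is already available, so that no genuinely new argument is needed.

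For $(1) \Rightarrow (2)$ I would argue as follows. Suppose $\beta(G) \leq k$ for every $G \in \G$, and put $m := k+1$. If some $G \in \G$ contained $H_m$ as a topological minor, then, as noted just above the corollary, $G$ would contain an $m$-block, so $\beta(G) \geq m = k+1$, contradicting the choice of~$k$. Hence no $G \in \G$ contains~$H_m$ as a topological minor.

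For $(2) \Rightarrow (3)$ I would use Theorem~\ref{dvorak theorem}. Recall that $H_m$ was constructed precisely so as to be planar and to admit a drawing in which every vertex of degree greater than~$3$ lies on the outer face. Since no $G \in \G$ contains~$H_m$ as a topological minor, Theorem~\ref{dvorak theorem} applied to~$H_m$ yields an $r = r(H_m)$ such that every $G \in \G$ has a \td\ in which every torso has at most~$r$ vertices of degree at least~$r$. Finally, for $(3) \Rightarrow (1)$, given such an~$r$ I would apply Theorem~\ref{structure theorem}~(ii) with parameter~$r$ to conclude that every $G \in \G$ has no $(r+1)$-block, that is, $\beta(G) \leq r$; taking $k := r$ gives~(1). (If $r < 2$, one may first replace~$r$ by~$2$: enlarging the threshold can only shrink the set of high-degree vertices, so condition~(3) is preserved.)

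There is no substantial obstacle here: all of the content sits in Theorem~\ref{dvorak theorem}, in Theorem~\ref{structure theorem}~(ii), and in the properties of the graphs~$H_m$ already recorded in this section. The only point that deserves a line of justification is that $H_m$ really does have the planar drawing required by Theorem~\ref{dvorak theorem}, which is exactly the modification of the graph in Figure~\ref{bestplanar} described above.
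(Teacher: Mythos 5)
Your proof is correct and follows exactly the route the paper intends: $(1)\Rightarrow(2)$ via the fact that containing~$H_m$ as a topological minor forces an $m$-block, $(2)\Rightarrow(3)$ via Theorem~\ref{dvorak theorem} applied to the planar drawing of~$H_m$, and $(3)\Rightarrow(1)$ via Theorem~\ref{structure theorem}~(ii). The small remark handling $r<2$ (monotonicity of condition~(3) in~$r$) is a valid way to meet the hypothesis $k\geq 2$ of Theorem~\ref{structure theorem}.
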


				\end{subsection}

	\begin{subsection}{The unbounded case}
	We now turn to the case where~$\G$ is a class of graphs excluding some fixed graph as a topological minor for which there exists no upper bound on the block number of graphs in~$\G$. If~$\G$ is closed under taking topological minors, then by Corollary~\ref{class bounded char} this implies $H_k \in \G$ for all $k \in \mathbb{N}$. Since $|H_k| \leq \beta(H_k)^3$, the bound in Theorem~\ref{planar block theorem} is optimal up to a constant factor.
	
	Our aim now is to prove Theorem~\ref{planar block theorem}. In light of Lemma~\ref{block to fan}, it clearly suffices to show the following.

		\begin{theorem} \label{planar block theorem strong}
			Let~$\G$ be a class of graphs excluding some fixed graph as a topological minor. There exists a constant $c = c(\G)$ such that every $G \in \G$ containing a $k$-fan-set~$X$ has at least $c|X|k^2$ vertices.
		\end{theorem}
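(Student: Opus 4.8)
The plan is to show that a fan-set which is ``too efficient'' would force a large clique subdivision, which $\G$ forbids. Fix a graph that $\G$ excludes as a topological minor and let $t$ be its number of vertices; then no $G\in\G$ contains $TK_t$ (a subdivision of $K_t$ contains a subdivision of any graph on $t$ vertices). By the classical bound of Bollob\'as and Thomason (and of Koml\'os and Szemer\'edi) there is a constant $c_1$ such that every graph with at least $c_1 t^2 m$ edges on $m$ vertices contains $TK_t$; since a topological minor of a $TK_t$-free graph is again $TK_t$-free, \emph{every} topological minor $M$ of any $G\in\G$ satisfies $e(M)<c_1 t^2|V(M)|$. We may also assume $k\ge k_0$ for any prescribed $k_0=k_0(\G)$, since otherwise $|V(G)|\ge|X|\ge|X|k^2/k_0^2$. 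It then suffices to show that if $n:=|V(G)|$ and $\ell:=|X|$ satisfy $n<c\,\ell k^2$ for a suitably small $c=c(\G)$, then $G$ contains $TK_t$: the plan is to build, from the fan structure, a subdivision of $K_s$ with $s$ of order $k\sqrt{\ell/n}$, which exceeds $t$ once $c$ is small enough.

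To produce such a subdivision I would take the union $U$ of $\ell$ fans, one $k$-fan to $X$ from each vertex of $X$, chosen so that no fan-path has an internal vertex in $X$ (each may be shortened to an $X$-path or an edge of $G[X]$). Then $U\subseteq G$ is a graph in which every vertex of $X$ emits $\ge k-1$ internally disjoint paths into $X$. Suppressing all degree-$2$ vertices of $U$ outside $X$ turns $U$ into a subdivision of a multigraph $M$ that is a topological minor of $G$, so its \emph{simple} edge set has size $<c_1 t^2|V(M)|\le c_1 t^2 n$, while $M$ carries at least $\ell(k-1)$ edges with multiplicity, one per non-trivial fan-path. Reconciling these should be possible only if a large share of the fan-paths either run along edges of $M$ of high multiplicity --- so that between some pair of vertices of $X$ there are many internally disjoint paths, which one then cross-links using that $X$-vertices sitting in different such bundles are themselves fan-connected, obtaining a clique subdivision --- or are long, in which case $|V(U)|$, hence $n$, is already of order $\ell k^2$ outright; passing to a subgraph of large minimum degree and a Koml\'os--Szemer\'edi-type linkage argument would then extract $\Omega(k\sqrt{\ell/n})$ vertices of $U$ that are pairwise joined by internally disjoint paths.

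The main obstacle is precisely this last step. Crude edge counting (via $e(U)\le c_1|V(U)|$, or via the suppressed multigraph) only delivers the \emph{linear} bound $n=\Omega_{\G}(\ell k)$, and this is the honest truth without the excluded minor: a once-subdivided $K_{k+1}$ is a $k$-fan-set of size $k+1$ on $\tfrac12(k+1)(k+2)$ vertices, and the ``missing'' factor $k$ is recovered only because that graph \emph{is} $TK_{k+1}$. Making the extra factor of $k$ rigorous --- controlling quantitatively how much the high-multiplicity structure can collapse under suppression, and how the long fan-paths force vertices --- is the real work, and it is where I expect the argument to be delicate. A heavier but more robust alternative, if the direct approach stalls, is to feed $G$ into the full structure theorem for graphs excluding a topological minor~\cite[Theorem~3]{dvorak} and run a Lipton--Tarjan-style separator recursion inside the resulting near-embeddable torsos; I would attempt the direct argument first and keep this as a fallback.
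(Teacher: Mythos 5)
Your write-up is a plan rather than a proof, and you say so yourself: the step that would produce the factor $k^2$ rather than $k$ is exactly the one you leave open. Everything you do establish (sparsity of all topological minors of $G$, the reduction to large $k$, the linear bound $n=\Omega_{\G}(\ell k)$ from counting fan-edges) is correct but elementary, and the theorem's entire content is the upgrade from $\ell k$ to $\ell k^2$. The route you propose for that upgrade --- extracting a $TK_s$ with $s=\Omega(k\sqrt{\ell/n})$ from an ``efficient'' fan structure --- is an unproved structural claim that looks at least as hard as the theorem itself, and it is not clear that the suppressed multigraph $M$ or a dense subgraph of $U$ carries enough genuinely disjoint connections to yield it: the $k$ paths of the fan at $x$ are disjoint from one another, but different fans may reuse the same vertices heavily, which is precisely the difficulty. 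Your fallback is also shaky: classes excluding a topological minor do not admit sublinear separators (bounded-degree expanders are the standard example, and the paper explicitly notes that the separator theorem does not extend to such classes), so a Lipton--Tarjan-style recursion cannot be run on them.

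For comparison, the paper gets the factor $k^2$ without any clique subdivisions, by making the fans \emph{long and disjoint} rather than by making the graph dense. One fixes a radius $r\approx \alpha k/d^2$ and a maximal system $\Po$ of internally disjoint $X$-paths of length at most $2r$, at most one per pair of endpoints (an idea borrowed from Grohe et al.); the auxiliary graph $H$ on $X$ recording which pairs are joined by a path of $\Po$ is a topological minor of $G$, hence sparse, hence has an independent set $Y$ with $|Y|\geq\alpha|X|$. Maximality of $\Po$ forces the truncated fans (each fan-path cut off at length $r$ or at its first return to $X\cup\bigcup\Po$) centred at distinct vertices of $Y$ to be pairwise disjoint, and a second sparsity count --- of a bipartite ``stopping'' graph, again a topological minor of $G$ --- shows that all but an average of $O(rd^2|X|/|Y|)$ of the $k$ paths in each such fan reach the full length $r$. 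Summing roughly $rk$ vertices over the at least $\alpha|X|$ disjoint fans gives $|G|\geq r|X|(\alpha k-2rd^2)=\Omega(|X|k^2)$. If you want to salvage your approach, this is the mechanism you would need to replace: some device that simultaneously prevents the $\ell$ fans from overlapping and prevents their paths from being short.
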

		
	This immediately yields the following strengthening of Theorem~\ref{planar block theorem}.
	
		\begin{corollary}	
	Let~$\G$ be a class of graphs excluding some fixed graph as a topological minor. Let $G \in \G$ and let~$X$ be the set of all vertices of~$G$ that lie in some $k$-block of~$G$. Then $|X| \leq |G|/(ck^2)$, where $c = c(\G)$ is the constant from Theorem~\ref{planar block theorem strong}.
		\end{corollary}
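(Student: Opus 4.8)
The plan is to deduce this directly from Theorem~\ref{planar block theorem strong} by verifying that the set~$X$ of all vertices lying in some $k$-block of~$G$ is itself a $k$-fan-set, and then reading off the bound.

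First I would recall that every $k$-block~$B$ of~$G$ is in particular a $(<\!k)$-inseparable set, so by Lemma~\ref{block to fan} it is a $k$-fan-set; that is, every $b \in B$ admits a $k$-fan to~$B$. Now let $x \in X$ be arbitrary. By definition of~$X$ there is a $k$-block~$B$ of~$G$ with $x \in B$, and the previous remark supplies a $k$-fan from~$x$ to~$B$. Since $B \sub X$, the end-vertices of the paths in this fan all lie in~$X$, so it is also a $k$-fan from~$x$ to~$X$. As $x \in X$ was arbitrary, $X$ is a $k$-fan-set.

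It then remains to apply Theorem~\ref{planar block theorem strong} to the $k$-fan-set~$X$ in the graph $G \in \G$: it yields $|G| \geq c|X|k^2$ for $c = c(\G)$, which rearranges to $|X| \leq |G|/(ck^2)$, as claimed. (If $X = \emptyset$ the inequality is trivial.)

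The only point that needs a moment's thought — and the closest thing to an obstacle here — is that an arbitrary union of $k$-fan-sets need not be a $k$-fan-set. What makes the argument go through is that for each vertex $x \in X$ we are free to pick one $k$-block containing~$x$ and realize the entire $k$-fan inside (the fan-set structure of) that single block; the end-vertices then automatically lie in~$X$, and we never have to splice together fans living in different blocks. So no genuine difficulty arises, and the corollary is in effect just a repackaging of Theorem~\ref{planar block theorem strong} via Lemma~\ref{block to fan}.
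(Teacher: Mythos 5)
Your proof is correct and is essentially the paper's own argument: every $k$-block is a $k$-fan-set by Lemma~\ref{block to fan}, the union $X$ of all $k$-blocks is again a $k$-fan-set, and Theorem~\ref{planar block theorem strong} gives the bound. Your worry that a union of $k$-fan-sets might fail to be a $k$-fan-set is unfounded --- since only the end-vertices of the fan must lie in the target set, a $k$-fan to one block is automatically a $k$-fan to the larger set, which is exactly the observation you make and is why the paper simply remarks that the union of $k$-fan-sets is a $k$-fan-set.
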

		
		\begin{proof}
		By Lemma~\ref{block to fan}, every $k$-block of~$G$ is a $k$-fan-set. It is easy to see that a union of $k$-fan-sets is again a $k$-fan-set. Since~$X$ is the union of all $k$-blocks, it is therefore a $k$-fan-set. By Theorem~\ref{planar block theorem strong} we have $|G| \geq c|X|k^2$ for $c = c(\G)$.
		\end{proof}
				
		We now turn to the proof of Theorem~\ref{planar block theorem strong} above. Excluding a topological minor ensures that our graph and all its topological minors are sparse. The following is well-known, see \cite[Chapter~7]{thebook}.
		
		\begin{lemma} \label{minor closed sparse}
		Let~$\G$ be a class of graphs excluding some fixed graph as a topological minor. There exist constants $\alpha, d > 0$ such that every topological minor~$G$ of a graph in~$\G$ has at most $d|G|$ edges and an independent set of order at least~$\alpha |G|$.
		\end{lemma}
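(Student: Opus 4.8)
The plan is to reduce the statement to one classical fact from extremal graph theory, namely that excluding a fixed graph as a topological minor forces bounded average degree. First note that the topological minor relation is transitive, so if $G$ is a topological minor of some graph in $\G$ and $H$ is the fixed graph excluded by $\G$, then $G$ itself contains no subdivision of $H$. Hence it suffices to produce $\alpha, d > 0$, depending only on $\G$, such that every graph $G$ with no $H$-subdivision satisfies $e(G) \le d|G|$ and has an independent set of order at least $\alpha|G|$.

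Set $r := |V(H)|$. Since $H$ is a subgraph of the complete graph $K_r$, any graph that contains a subdivision of $K_r$ also contains a subdivision of $H$: just keep the branch vertices and the branch paths corresponding to $V(H)$ and $E(H)$. So $G$ contains no subdivision of $K_r$. Now I would invoke Mader's theorem (or, for a better dependence on $r$, the bound of Bollob\'as and Thomason and of Koml\'os and Szemer\'edi), see \cite[Chapter~7]{thebook}: there is a constant $c_r$ such that every graph of average degree at least $c_r$ contains a subdivision of $K_r$. Consequently $G$ has average degree less than $c_r$, i.e. $e(G) < \frac{c_r}{2}|G|$, and we may take $d := c_r/2$, which depends only on $\G$. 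For the independent set I would use the standard greedy bound: a graph on $n$ vertices of average degree at most $\bar d$ contains an independent set of size at least $n/(\bar d + 1)$ (repeatedly remove a minimum-degree vertex together with its closed neighbourhood, or cite Tur\'an's theorem). Applying this with $\bar d \le 2d$ yields an independent set of order at least $|G|/(2d+1)$, so $\alpha := 1/(2d+1)$ works.

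The only substantial ingredient is the density theorem for topological minors; the two reductions are elementary and I do not foresee any real obstacle. If one prefers to avoid the quadratic bound on $c_r$, Mader's original result --- which merely asserts that the edge bound holds with \emph{some} constant depending on $r$ --- already suffices, since for this lemma we only need $d$ and $\alpha$ to be positive constants determined by $\G$.
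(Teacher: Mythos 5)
Your proof is correct and is essentially the argument the paper has in mind: the paper gives no proof of its own, citing the lemma as well-known from~\cite[Chapter~7]{thebook}, and your reduction via transitivity of the topological-minor relation, Mader's density theorem for $K_r$-subdivisions, and the greedy independent-set bound is exactly the standard argument behind that citation.
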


		\begin{proof}[Proof of Theorem~\ref{planar block theorem strong}]
		Let $G \in \G$ and $k \in \mathbb{N}$. To ease notation, we assume that $X \sub V(G)$ is a $(k+1)$-fan-set instead of just a $k$-fan-set. This only has an effect on the constant~$c$.
		
		For every $x \in X$ let~$\Q_x$ be a $k$-fan from~$x$ to~$X \setminus \{ x \}$. Taking subpaths, if necessary, we may assume that no $Q \in \Q_x$ has an internal vertex in~$X$. We use initial segments of the paths in~$\Q_x$ to construct a subdivision of a star with center~$x$. Lemma~\ref{minor closed sparse} will enable us to find many disjoint such subgraphs.

		We adopt an idea from~\cite{grohe}. For some integer~$r$ that we are going to choose later, let~$\Po$ be a maximal set of internally disjoint $X$-paths of length at most~$2r$ such that for any two $x, y \in X$ there is at most one path in~$\Po$ joining them. The paths in~$\Po$ will be used as barriers to separate the subdivided stars. Let $B := X \cup \bigcup \Po$.
				
		For $x \in X$ and $Q \in \Q_x$, let $Q' \sub Q$ be the maximal subpath of length at most~$r$ with $Q' \cap B = \{ x \}$. If the length of~$Q'$ is less than~$r$, then the next vertex along~$Q$ lies in~$B$. We say that this vertex \emph{stops} the path~$Q'$. Define $\Q_x' := \{ Q' \colon Q \in \Q_x \}$ and $S_x := \bigcup \Q_x'$.
		
		The paths of~$\Po$ provide us control on the overlap of the stars and allow us to separate them. Let $H = H( \Po)$ be the auxiliary graph with vertex-set~$X$ where $xy \in E(H)$ if and only if some $P \in \Po$ joins~$x$ and~$y$. 
		\begin{equation} \label{barriers}
			\text{If } \bigcup \Q_x' \cap \bigcup \Q_y' \neq \emptyset, \text{ then } xy \in E(H) .
		\end{equation}
		Indeed, if $ \bigcup \Q_x' \cap \bigcup \Q_y' \neq \emptyset$ then we can find a path~$P$ of length at most~$2r$ between~$x$ and~$y$ which is internally disjoint from all paths in~$\Po$. By maximality of~$\Po$, there must already be some $R \in \Po$ joining~$x$ and~$y$. Similarly
		\begin{equation}
			\text{If } x \in X \text{ stops some } Q' \in \Q_y', \text{ then } xy \in E(H).
		\end{equation}
		
		The graph~$H$ is clearly a topological minor of~$G$. It follows from Lemma~\ref{minor closed sparse} that $|\Po| = |E(H)| \leq d|X|$ and that~$H$ contains an independent set $Y \sub X$ with $|Y| \geq \alpha |X|$. By~(\ref{barriers}), the stars with centers in~$Y$ are pairwise disjoint and $z \in Y$ does not stop any $Q' \in \Q_y'$ for $y \in Y$. We will show that, on average, many paths in~$\Q_y'$, $y \in Y$, have length~$r$. 
		
		For $y \in Y$ let~$q_y$ be the number of $Q' \in \Q_y'$ that were stopped. Extending each $Q' \in \Q_y'$ that was stopped by a single edge, we obtain a path~$Q''$ from~$y$ to the vertex $v \in B$ that stopped~$Q'$. Note that if~$v$ stops some $Q' \in \Q_y'$, then $v \in B \setminus Y$. We therefore obtain a bipartite graph~$J$ with $V(J) = Y \cup (B \setminus Y)$ as a topological minor of~$G$, where $yv \in E(J)$ if and only if~$v$ stops some $Q' \in \Q_y'$. It follows from Lemma~\ref{minor closed sparse} that
		\[
		e(J) \leq d|J| \leq d(|X| + (2r-1)|\Po|) \leq 2rd^2|X| . 
		\]
		Since the paths in~$\Q_y$ intersect only in~$y$, no vertex can stop more than one $Q' \in \Q_y'$. Therefore
		\[
		\sum_{y \in Y} q_y \leq  e(J) \leq 2rd^2 |X| .
		\]
	It follows that 		 
	\begin{align*}	
		|G| &\geq \sum_{y \in Y} | S_y| > \sum_{y \in Y} r(k- q_y) \geq r|Y|k - 2r^2d^2 |X| \\
		&\geq r|X| ( \alpha k - 2rd^2 ) .
	\end{align*}
	Setting $r := \lfloor \frac{\alpha k}{4d^2} \rfloor$ yields the desired result.	
\end{proof}			

	\end{subsection}

		\end{section}

		\begin{section}{Admissibility and $k$-blocks}
		\label{admissibility}
		
	We now prove Theorem~\ref{fans and blocks theorem}, which asserts that block number and \infadm\ are within a constant multiplicative factor. By Lemma~\ref{block to fan}, every $k$-block is a $k$-fan-set and so
	\[
	\beta(G) \leq \adm_{\infty}(G) .
	\]
	It thus only remains to show $\beta(G) \geq \lfloor (\adm_\infty(G) + 1) / 2 \rfloor$. Lemma~\ref{lean treedec fan block} provides a sufficient condition for a set of vertices to be $(<\! k)$-inseparable. Our proof is an adaptation of the proof of~\cite[Theorem~4.2]{forceblock}, where it is shown that $\beta(G) \geq \lfloor \delta(G) / 2 \rfloor + 1$. This is also a consequence of our result, since~$V(G)$ itself is a $(\delta(G) + 1)$-fan-set.

			\begin{proof}[Proof of Theorem~\ref{fans and blocks theorem}]
				The inequality $\beta(G) \leq \adm_{\infty}(G)$ follows from Lemma~\ref{block to fan}.
								
				Suppose now that $\adm_{\infty}(G) \geq 2k - 1$ and let $X \sub V(G)$ be a $(2k-1)$-fan-set. We will show that~$X$ contains a $(<\! k)$-inseparable set.
				
			By Lemma~\ref{atomic lean} there exists a $k$-lean \td\ $(T, \V)$ of~$G$. Let $S \sub T$ be a minimal subtree such that $X \sub \bigcup_{s \in S} V_s$. Let $t \in S$ be a leaf of~$S$. If $S = \{ t \}$, then $X \sub V_t$ and from every $x \in X$ there is a $(2k-1)$-fan to~$V_t$. By Lemma~\ref{lean treedec fan block}, $X$ itself is already $(<\! k)$-inseparable.
				
				Otherwise, let~$t'$ be the unique neighbor of~$t$ in~$S$ and let $W := X \cap V_t \setminus V_{t'}$. Note that $W \neq \emptyset$, for otherwise $S - t$ would violate the minimality of~$S$. Let $w \in W$ arbitrary and let~$\Q_w$ be a $(2k-1)$-fan from~$w$ to~$X$. Every $Q \in \Q_w$ whose endvertex is not in~$W$ must meet $V_t \cap V_{t'}$. Thus at most $|V_t \cap V_{t'}| < k$ paths from~$\Q_w$ have endvertices outside~$W$. In particular, $|W| \geq k$. Furthermore by stopping every $Q \in \Q_w$ when it hits $V_t \cap V_{t'}$ (if it does) we obtain a $(2k-1)$-fan from~$w$ to~$V_t$. By Lemma~\ref{lean treedec fan block} the vertices of~$W$ cannot be separated by deleting fewer than~$k$ vertices.
			\end{proof}

		\end{section}

				\begin{section}{Tree-width and $k$-blocks}
		\label{tree-width}
		This section is devoted to the relation between tree-width and the occurrence of $k$-blocks in a minor. By considering random graphs, one can show that there are graphs~$G_n$ on~$n$ vertices with~$2n$ edges and tree-width at least $\gamma n$ for some absolute constant $\gamma > 0$ (see~\cite[Corollary~5.2]{kloks}). By~(\ref{block num edges}) we have $\beta(H) \leq 1 + \sqrt{4n}$ for every $H \prec G_n$. Hence the bound in Theorem~\ref{tree-width block theorem} is best possible up to constant factors.
		
		We now show that every graph of tree-width at least $2(k^2-1)$ has a minor with block number at least~$k$. In fact, this follows easily from a lemma in the proof of the Grid Minor Theorem given by Diestel, Jensen, Gorbunov and Thomassen~\cite{diestelgrid}. To state their result, we need to introduce some terminology.
		
		Let~$G$ be a graph. Call a set~$X$ of vertices \emph{externally $k$-linked in~$G$} if for any $Y, Z \sub X$, $|Y| = |Z| \leq k$, there are~$|Y|$ disjoint $X$-paths joining~$Y$ and~$Z$. A \emph{$k$-mesh of order~$m$} is a separation $(A, B)$ with $|A \cap B| = m$ such that $A \cap B$ is externally $k$-linked in $G[B] - E(A \cap B)$ and there is a tree $T \sub G[A]$ with $\Delta(T) \leq 3$ such that every vertex of $A \cap B$ lies in~$T$ and has degree at most~2 in~$T$.
		
		\begin{lemma}[{\cite[Lemma~4]{diestelgrid}} ] \label{diestel et al}
			Let~$G$ be a graph and $m \geq k \geq 1$ integers. If $\tw(G) \geq k + m - 1$, then~$G$ has a $k$-mesh of order~$m$.
		\end{lemma}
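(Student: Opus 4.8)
The plan is to extract the mesh from a \emph{haven} of order $k+m$ in $G$. By the tree-width duality theorem of Seymour and Thomas, the hypothesis $\tw(G)\geq k+m-1$ supplies such a haven: a map $\beta$ assigning to every vertex set $S$ with $|S|\leq k+m-1$ a component $\beta(S)$ of $G-S$, monotone in the sense that $\beta(S')\subseteq\beta(S)$ whenever $S\subseteq S'$. (Equivalently one may work with a bramble of order $k+m$; the two are interchangeable.) Informally, $\beta$ supplies, for any partial structure meeting at most $k+m-1$ vertices, a large connected region of $G$ to keep growing into, and it is this reserve of connectivity that has to power both requirements of a mesh: the subcubic tree on one side and the external $k$-linkedness on the other.

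The first step is to build, greedily and guided by $\beta$, a subcubic tree $T\subseteq G$ with exactly $m$ leaves. Start $T$ as a single vertex; repeatedly pick an ``active'' vertex of $T$ of degree less than $3$, let $S$ be the union of the current active vertices, and attach at that vertex a new path which, apart from its first vertex, runs inside $\beta(S)$ and hence meets the rest of $T$ only in that first vertex; then update which vertices are active, branching so as to increase the number of leaves. The book-keeping is arranged so that at most $m$ vertices are active at a time and $|S|$ plus the endpoints of the attached path never exceeds $k+m-1$, which is exactly what keeps $\beta(S)$ available. Stop once $T$ has $m$ leaves and let $X$ be the set of those leaves, so that $|X|=m$ and every vertex of $X$ has degree $1$ in $T$.

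The second step is to turn this into the stated separation. Take the separation $(A,B)$ of order $m$ with $A\cap B=X$ in which the $B$-side retains the haven — the side holding the region into which $T$ was grown. Then $G[A]\supseteq T$ already gives the subcubic-tree condition, so it remains to certify that $X$ is externally $k$-linked in $G[B]-E(X)$. For $Y,Z\subseteq X$ with $|Y|=|Z|=j\leq k$ one argues by Menger's theorem: a set of fewer than $j$ vertices separating $Y$ from $Z$ in $G[B]-E(X)$, together with the portals of $X$ not used by $Y$ or $Z$, would be a set of fewer than $m$ vertices meeting every component that the haven restricted to the $B$-side designates, and hence a transversal of that restricted haven — impossible, since the restriction still has order exceeding $m$, the $k\geq1$ surplus units of the original haven having been kept in reserve. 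Because the leaves of $T$ were grown onto the boundary of the haven region, the portals in $Y$ and $Z$ genuinely sit next to that region, which is what lets a Menger cut be converted into such a transversal.

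The step I expect to be the main obstacle is making these two halves cooperate quantitatively: the tree must be grown so that it consumes only $m$ of the haven's $k+m$ units of connectivity, leaving precisely $k$ units in reserve, so that after $A$ is carved off the haven \emph{restricted to $B$ still behaves with order larger than $m$} against the small sets $X,Y,Z$ in play. Keeping the active set bounded by $m$, always routing new paths through the current haven component, and landing the leaves on the haven boundary is the delicate core. By comparison, the remaining checks — that $\Delta(T)\leq3$ is maintained throughout, that the threshold $k+m-1$ is never exceeded, and that the final tree really induces a separation of order exactly $m$ — are routine.
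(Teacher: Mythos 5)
The paper does not prove this lemma at all: it is quoted (in contrapositive form) from Diestel, Jensen, Gorbunov and Thomassen, and their argument runs in the opposite direction to yours. They assume $G$ has \emph{no} $k$-mesh of order $m$ and construct a tree-decomposition of width less than $k+m-1$ by induction on $|B\setminus A|$, growing the tree $T$ and the separation $(A,B)$ in tandem inside $G[A]$: while $|A\cap B|<m$ they extend $T$ by a path into $B$, and once $|A\cap B|=m$ the absence of a $k$-mesh means external $k$-linkedness fails, whence a Menger separator $S$ with $|S|<k$ is used to split $B$ into smaller pieces on which they recurse, the part $(A\cap B)\cup S$ having size less than $m+k$. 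No haven is needed, and the separation $(A,B)$ is precisely what keeps each newly attached path disjoint from the rest of $T$.

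Your direct construction has genuine gaps at exactly the two places you flag as the delicate core. First, the tree-growing step: for a path attached at an active vertex to meet $T$ only in that vertex, the component supplied by the haven must avoid \emph{all} of $T$, which forces $S\supseteq V(T)$; but $|V(T)|$ is not bounded by $k+m-1$ (a subcubic tree with $m$ leaves has more than $2m$ vertices before one even counts the connecting paths). If instead $S$ contains only the at most $m$ active vertices, the haven component of $G-S$ may contain inactive vertices of $T$, so the new path can re-enter the tree; moreover an active vertex $v\in S$ need not have any neighbour in that component. Second, the linkedness step: given a Menger cut $S_0$ with $|S_0|<j$ separating $Y$ from $Z$ in $G[B]-E(X)$, the set $W=S_0\cup(X\setminus(Y\cup Z))$ does have size less than $m$, but nothing forces the haven's component at $W$ to lie in $B$, to be adjacent to both $Y$ and $Z$, or to be reachable from them within $B\setminus(X\cup S_0)$; haven monotonicity gives no information here because $W$ is not a superset of any set at which you know where the haven points, so the cut cannot be converted into a ``transversal'' as claimed. (A further unaddressed point: there is no reason why the leaf set of $T$ should be the separator of a separation of order exactly $m$, since internal vertices of $T$ may have neighbours outside $V(T)$.) These are not presentational issues; the contrapositive induction of the cited proof exists precisely because the direct haven argument does not close at this tight quantitative threshold.
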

	
	\begin{lemma}
		Let $p \geq 0, k \geq 2$ be integers and let~$T$ be a tree with $\Delta(T) \leq 3$ and $X \sub V(T)$ a set of at least $(2p+1)(k-1)$ vertices of degree at most~2. Then there are disjoint subtrees $T_1, \ldots, T_p \sub T$ such that $|T_i \cap X| \geq k$ for every $i \in [p]$.
	\end{lemma}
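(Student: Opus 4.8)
The natural approach is induction on $p$, peeling off one subtree at a time. The case $p=0$ is vacuous, so assume $p\ge 1$; then $|X|\ge(2p+1)(k-1)\ge 3(k-1)\ge k+1$ because $k\ge 2$, and in particular $|V(T)|\ge 2$, so $T$ has a leaf. Root $T$ at a leaf $\ell$, so that $\ell$ has exactly one child while every non-root vertex has at most two children (this is where $\Delta(T)\le 3$ is used). For $v\in V(T)$ let $f(v)$ be the number of vertices of $X$ in the subtree $T_v$ of $T$ rooted at $v$.

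The first step is to extract a single subtree whose number of $X$-vertices is at least $k$ but not too much larger. Since $f(\ell)=|X|\ge k$, there is a vertex $v$ with $f(v)\ge k$; choose one of maximum depth. By maximality, every child $c$ of $v$ satisfies $f(c)\le k-1$, and $f(v)\le 1+\sum_{c}f(c)$, the summand $1$ accounting for $v$ itself. If $v\notin X$, then $v$ has at most two children, so $f(v)\le 2(k-1)$. If $v\in X$, then $\deg_T(v)\le 2$ by hypothesis, so---once we know $v\ne\ell$---it has at most one child and $f(v)\le 1+(k-1)=k$. Hence in all cases $k\le f(v)\le 2(k-1)$. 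Finally $v\ne\ell$: otherwise the unique child of $\ell$ would have $f$-value at most $k-1$, forcing $|X|=f(\ell)\le k$, which contradicts $|X|\ge k+1$.

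Now set $T_p:=T_v$ and delete it from the tree: let $T':=T-V(T_v)$. Since $v$ is not the root, the path in $T$ from any vertex of $T'$ to $\ell$ avoids $T_v$, so $T'$ is a nonempty tree; moreover $\Delta(T')\le\Delta(T)\le 3$, and every vertex of $X':=X\cap V(T')$ still has degree at most $2$ in $T'$. The point of the bound $f(v)\le 2(k-1)$ is the arithmetic of the inductive step: $|X'|=|X|-|V(T_v)\cap X|\ge(2p+1)(k-1)-2(k-1)=(2(p-1)+1)(k-1)$. Applying the induction hypothesis to $T'$, $X'$ and $p-1$ produces disjoint subtrees $T_1,\dots,T_{p-1}\subseteq T'$, each meeting $X'$, hence $X$, in at least $k$ vertices; together with $T_p$ these are the desired $p$ pairwise disjoint subtrees.

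The only genuinely delicate step---and the unique place where $\Delta(T)\le 3$ and the degree restriction on $X$-vertices are used in combination---is the upper bound $f(v)\le 2(k-1)$ on the subtree that gets peeled off. Without the assumption that $X$-vertices have degree at most $2$, a maximal-depth $v\in X$ with $f(v)\ge k$ could have two children and $f(v)=2k-1$; then the count in the inductive step would fall one short of $(2(p-1)+1)(k-1)$ and the induction would collapse. The remaining steps are routine bookkeeping.
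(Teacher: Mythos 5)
Your proof is correct and follows essentially the same route as the paper: root $T$ at a leaf, pick a deepest vertex whose rooted subtree contains at least $k$ vertices of $X$, use $\Delta(T)\le 3$ together with the degree bound on $X$-vertices to show this subtree contains at most $2(k-1)$ vertices of $X$, peel it off, and induct. The bookkeeping (including the check that the chosen vertex is not the root) matches the paper's argument.
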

	
	\begin{proof}
		By induction on~$p$. The case where $p \in \{ 0, 1 \}$ is trivial. In the inductive step, declare a leaf~$r$ of~$T$ as the root and thus introduce an order on~$T$. Choose $t \in T$ maximal in the tree-order such that~$\lfloor t \rfloor$, the subtree containing~$t$ and all its descendants, contains at least~$k$ vertices of~$X$. Note that $t \neq r$, since $|X| > k$. 
		
		If $t \in X$, then $|\lfloor t \rfloor \cap X| = k$ because~$t$ has only one successor~$s$ and $|\lfloor s \rfloor \cap X| < k$. If $t \notin X$, then similarly $|\lfloor t \rfloor \cap X| \leq 2(k-1)$. Let $S := T - \lfloor t \rfloor$ and note that $|S \cap X| \geq |X| - 2(k-1)$. By the inductive hypothesis applied to~$S$ and $S \cap X$ we find disjoint $S_1, \ldots, S_{p-1} \sub S$ with $|S_i \cap X| \geq k$ for all $i \in [p-1]$.
		For $1 \leq i < p$ let $T_i := S_i$ and put $T_{p} := \lfloor t \rfloor$. These subtrees of~$T$ are as desired.
	\end{proof}
	
	We thus obtain the following more precise version of Theorem~\ref{tree-width block theorem}.
	
	\begin{theorem}
		Let~$G$ be a graph and $p \geq k \geq 2$ integers. If the tree-width of~$G$ is at least $2(k-1)(p+1)$, then some minor of~$G$ contains a $(<\! k)$-inseparable independent set of size~$p$.
	\end{theorem}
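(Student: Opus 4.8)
The plan is to feed Lemma~\ref{diestel et al} into the tree lemma above and then package the result as a minor. First I would set $m := (2p+1)(k-1)$ and note that, since $p \geq k \geq 2$, we have $m \geq k$ and $k + m - 1 = 2(k-1)(p+1)$. Thus the hypothesis $\tw(G) \geq 2(k-1)(p+1)$ permits an application of Lemma~\ref{diestel et al}, yielding a $k$-mesh $(A,B)$ of order~$m$. Write $X := A \cap B$ and let $T \sub G[A]$ be the tree from the definition of a mesh, so $\Delta(T) \leq 3$ and every vertex of~$X$ has degree at most~$2$ in~$T$. Because $|X| = m = (2p+1)(k-1)$, the tree lemma applied to~$T$ and~$X$ produces pairwise disjoint subtrees $T_1, \dots, T_p \sub T$ with $|V(T_i) \cap X| \geq k$ for each $i \in [p]$; fix a $k$-element subset $X_i \sub V(T_i) \cap X$ for every~$i$.

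Next I would construct the minor. Let $G_0$ be the subgraph of~$G$ with vertex set $B \cup \bigcup_i V(T_i)$ and edge set $(E(G[B]) \setminus E(G[X])) \cup \bigcup_i E(T_i)$; since $(A,B)$ is a separation and each $T_i \sub G[A]$, this is genuinely a subgraph of~$G$. Let~$H$ be obtained from~$G_0$ by contracting each connected tree~$T_i$ to a single vertex~$v_i$, so~$H$ is a minor of~$G$. I then need that $\{v_1, \dots, v_p\}$ is an independent set in~$H$ and that it is $(<\! k)$-inseparable. For independence: a vertex of $V(T_i) \setminus X$ lies in $A \setminus B$ and hence has all its $G_0$-neighbours inside~$V(T_i)$, while a vertex of $V(T_i) \cap X$ has its $G_0$-neighbours in $(B \setminus X) \cup V(T_i)$, the edges of $E(G[X])$ having been deleted. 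Consequently $N_H(v_i) \sub B \setminus X$, and in particular $v_i v_j \notin E(H)$ whenever $i \neq j$.

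For $(<\! k)$-inseparability, fix $i \neq j$ and apply the external $k$-linkage of~$X$ in $G[B] - E(G[X])$ to the $k$-sets $Y := X_i$ and $Z := X_j$: this gives $k$ pairwise disjoint $X$-paths $P_1, \dots, P_k$ in $G[B] - E(G[X])$, each joining~$X_i$ to~$X_j$. Every~$P_\ell$ has length at least~$2$ (a length-$1$ $X$-path would be an edge of $G[X]$), its two endvertices lie in~$X_i$ and~$X_j$ respectively, and all its internal vertices lie in $B \setminus X$; hence $P_\ell \sub G_0$. After the contractions the endvertex of~$P_\ell$ in~$X_i$ becomes~$v_i$ and the one in~$X_j$ becomes~$v_j$, so~$P_\ell$ induces a $v_i$--$v_j$ path in~$H$ whose internal vertices are precisely those of~$P_\ell$. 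As the~$P_\ell$ are disjoint, these $k$ paths are internally disjoint, so by Menger's Theorem no fewer than~$k$ vertices separate~$v_i$ from~$v_j$ in~$H$. Since $p \geq k$, the set $\{v_1, \dots, v_p\}$ has at least~$k$ vertices and is thus a $(<\! k)$-inseparable independent set of size~$p$ in the minor~$H$.

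I expect all the real difficulty to already reside in the two lemmas invoked (Lemma~\ref{diestel et al} and the tree lemma), so the only delicate point here is the bookkeeping of the minor: choosing exactly which edges to delete so that the contracted vertices $v_i$ become pairwise non-adjacent while the $X$-linkage of the mesh still survives, and noticing that $X$-paths in $G[B] - E(G[X])$ have length at least~$2$, which is what keeps the contraction from identifying $v_i$ with $v_j$.
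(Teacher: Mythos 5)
Your proposal is correct and follows essentially the same route as the paper: apply Lemma~\ref{diestel et al} to obtain a $k$-mesh of order $(2p+1)(k-1)$, use the tree lemma to extract $p$ disjoint subtrees each meeting $A\cap B$ in at least $k$ vertices, delete edges so the contracted subtrees become independent, and use the external $k$-linkage to certify $(<\!k)$-inseparability. Your bookkeeping (deleting all of $E(G[X])$ and keeping all of $B$) differs only cosmetically from the paper's choice of $W$ and edge deletions, and your verification of independence and of the survival of the linkage paths is sound.
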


	\begin{proof}
	Let $m := \tw(G) - k + 1$. By Lemma~\ref{diestel et al} above, $G$ has a $k$-mesh $(A, B)$ of order~$m$. Let $T \sub G[A]$ be the tree guaranteed by the definition. 
	
	 Since $m \geq (k-1)(2p + 1)$, we can apply the lemma above to find disjoint subtrees $T_1, \ldots, T_p \sub T$ such that each contains at least~$k$ vertices of $A \cap B$.

	Let $W := (B \setminus A) \cup \bigcup_{i = 1}^p V(T_i)$ and obtain~$H$ from~$G[W]$ by deleting all edges between~$T_i$ and~$T_j$ for $i \neq j$. 	Given $1 \leq i , j \leq p$, the graph~$H$ contains~$k$ disjoint paths between $T_i \cap (A \cap B)$ and $T_j \cap (A \cap B)$ with no internal vertices or edges in $A \cap B$, since $A \cap B$ is externally $k$-linked in $G[B] - E(A, B)$.

 Contracting each~$T_i$ to a single vertex thus yields the desired $(<\! k)$-inseparable independent set in a minor of~$H$ and thus of~$G$.
		\end{proof}

	Taking $p = k$ clearly yields Theorem~\ref{tree-width block theorem}.			
		
		\end{section}

				\begin{section}{Concluding remarks}
				\label{conclusion}
								
				From Theorem~\ref{tree-width block theorem} and Lemma~\ref{block to fan} we deduce the following.
				
				\begin{corollary}
					Let $k \geq 1$ be an integer. Every graph of tree-width at least $2k^2 - 2$ has a minor with \infadm\ at least~$k$.
				\end{corollary}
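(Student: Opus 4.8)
The plan is simply to chain together the two facts the corollary cites. First I would apply Theorem~\ref{tree-width block theorem} to $G$: since $\tw(G) \geq 2k^2 - 2$, it yields a minor $H$ of $G$ that contains a $k$-block~$X$. By definition a $k$-block is a $(<\! k)$-inseparable set with $|X| \geq k$, so in particular $X$ is a $(<\! k)$-inseparable set of vertices of the graph~$H$.

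Next I would invoke Lemma~\ref{block to fan}, applied to the graph~$H$ and the set~$X$: it tells us precisely that such a $(<\! k)$-inseparable set is a $k$-fan-set. Since $X \sub V(H)$ is a $k$-fan-set in~$H$, the definition of \infadm\ gives $\adm_\infty(H) \geq k$, and $H$ is the minor of~$G$ we were after. This completes the argument, with $k \geq 1$ exactly as in Theorem~\ref{tree-width block theorem}.

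There is essentially no obstacle here; the one point worth keeping in mind is that the $k$-fan-set produced by Lemma~\ref{block to fan} lives inside the very graph~$H$ in which the $k$-block was found, so that we genuinely obtain a \emph{minor} of~$G$ with \infadm\ at least~$k$, rather than merely a statement about~$G$ itself. It is also worth remarking that the bound $2k^2 - 2$ is best possible up to a multiplicative constant: the same edge count behind~(\ref{block num edges}) shows that a $k$-fan-set forces $\Omega(k^2)$ edges, so $\adm_\infty(H) = O(\sqrt{e(G)})$ for every minor~$H$ of~$G$, and the graphs with $2n$ edges and tree-width $\Omega(n)$ discussed before Theorem~\ref{tree-width block theorem} witness that this cannot be improved.
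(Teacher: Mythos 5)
Your argument is exactly the paper's intended deduction: the corollary is stated as an immediate consequence of Theorem~\ref{tree-width block theorem} and Lemma~\ref{block to fan}, and you chain them in precisely that way, correctly noting that the $k$-fan-set lives in the minor~$H$ itself. The proof is correct and takes the same approach as the paper.
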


		Richerby and Thilikos~\cite{richerby} proved the existence of a function~$g$ such that graphs of tree-width at least~$g(k)$ have a minor with \infadm~$\geq k$. In their proof, $g(k)$ is the minimum~$N$ such that graphs of tree-width at least~$N$ have the $k^{3/2} \times k^{3/2}$-grid as a minor. The existence of such an~$N$ is the rather difficult \emph{Grid-Minor Theorem} of Robertson and Seymour~\cite{excludeplanar}. In comparison, our proof is short and simple: the only non-trivial step was a lemma from~\cite{diestelgrid}, whose proof is about a page long and in fact the first step in their proof of the Grid-Minor Theorem. Moreover, we have provided an explicit quadratic bound on~$g(k)$, while even the existence of a polynomial bound upper bound for~$N$ is a recent breakthrough-result of Chekuri and Chuzhoy~\cite{polygrid}. \\

		Dvo\v{r}\'{a}k proved that for every~$k$ there are integers~$m$ and~$d$ such that every graph with \infadm\ at most~$k$ has a \td\ in which every torso contains at most~$m$ vertices of degree at least~$d$ (\cite[Corollary~5]{dvorak}). The proof is based on a deep structure theorem for graphs excluding a topological minor~\cite[Theorem~3]{dvorak} and does not yield explicit bounds for~$m$ and~$d$. Combining Theorem~\ref{structure theorem} with Lemma~\ref{block to fan}, we obtain a much simpler proof that avoids the use of advanced graph minor theory and moreover provides explicit values for the parameters involved. 		
	
		\begin{corollary}
			Let $k \geq 1$ be an integer. If~$G$ has \infadm\ at most~$k$, then~$G$ has a \td\ of adhesion less than~$k$ in which every torso contains at most~$k$ vertices of degree at least~$2k(k-1)$.
		\end{corollary}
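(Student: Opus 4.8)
The plan is to reduce the statement to Theorem~\ref{structure theorem}~(i) by means of the observation, already exploited throughout Section~\ref{admissibility}, that a graph with small \infadm\ cannot contain a large block. Concretely, I would first establish the implication: if $\adm_{\infty}(G) \le k$, then $G$ has no $(k+1)$-block.

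To see this, argue contrapositively. Suppose $G$ contains a $(k+1)$-block $X$. By definition $X$ is a $(<\!(k+1))$-inseparable set of vertices, so Lemma~\ref{block to fan}, applied with the integer $k+1$ in place of $k$, shows that $X$ is a $(k+1)$-fan-set. Hence $\adm_{\infty}(G) \ge k+1$, contradicting the hypothesis. Therefore $\adm_{\infty}(G) \le k$ indeed forces $G$ to have no $(k+1)$-block.

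Now I would simply invoke Theorem~\ref{structure theorem}~(i) for this value of~$k$: since $G$ has no $(k+1)$-block, it admits a \td\ of adhesion less than~$k$ in which every torso has at most~$k$ vertices of degree at least $2k(k-1)$. This is precisely the conclusion of the corollary, so nothing further is needed.

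The argument is entirely mechanical once Theorem~\ref{structure theorem} and Lemma~\ref{block to fan} are available, so there is no genuine obstacle. The only point that warrants a moment's attention is the index bookkeeping: Lemma~\ref{block to fan} converts a $(<\!k)$-inseparable set into a $k$-fan-set, so it must be fed the integer $k+1$, and correspondingly Theorem~\ref{structure theorem} is applied with its parameter equal to the~$k$ of the corollary, so that ``no $(k+1)$-block'' is the relevant hypothesis and ``adhesion less than~$k$'' the relevant conclusion.
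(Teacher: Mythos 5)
Your proposal is correct and is exactly the argument the paper intends: the corollary is stated there as an immediate combination of Lemma~\ref{block to fan} (in contrapositive form, to rule out a $(k+1)$-block) with Theorem~\ref{structure theorem}~(i), and your index bookkeeping is right. The only cosmetic point, which the paper itself also glosses over, is that Theorem~\ref{structure theorem} is stated for $k\geq 2$ while the corollary allows $k=1$; that case is trivial since $\adm_\infty(G)\leq 1$ forces $G$ to be edgeless.
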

		
	It seems challenging to obtain stronger estimates: What is the minimum $N = N(k)$ such that every graph without a $k$-block has a \td\ in which every torso contains at most~$N$ vertices of degree at least~$N$? Can we always find a \td\ in which every torso has a bounded number of vertices of degree at least~$\alpha k$ for some constant $\alpha > 0 $? \\
		
		Admissibility of graphs has primarily been studied with a length-restriction imposed. We call the maximum length of a path in a fan~$\Q$ the \emph{radius} of~$\Q$. A $(k,r)$-fan-set is a set $X \sub V(G)$ such that from every $x \in X$ there is a $k$-fan of radius at most~$r$ to~$X$. The \emph{$r$-admissibility} $\adm_r(G)$ is the maximum~$k$ for which~$G$ has a $(k,r)$-fan-set. In particular 
		\[
		1 + \max_{H \sub G} \, \delta(H) = \adm_1(G) \leq \adm_2(G) \leq \ldots \leq \adm_{|G|}(G) = \adm_{\infty}(G) .
		\]
		Note that for every integer $r \geq 1$ trivially
	\begin{equation}
		\label{triv admiss}
	\adm_r(G) > \adm_{\infty}(G) - \frac{|G|}{r+1} ,
	\end{equation}
		since a fan cannot contain $|G|/(r+1)$ paths of length~$>r$.
		
		Grohe et al showed in~\cite{grohe} that for every class of graphs~$\G$ excluding a topological minor we have $\adm_r(G) = \mathcal{O}(r)$ for every $G \in \G$. Taking~(\ref{triv admiss}) into account we obtain the trivial estimate $\adm_{\infty}(G) = \mathcal{O}(\sqrt{|G|})$ for $G \in \G$, which also follows from a simple edge-count and Lemma~\ref{minor closed sparse}. On the other hand, Theorem~\ref{planar block theorem strong} shows that $\adm_r(G) = \mathcal{O}( \sqrt[3]{|G|})$ for every~$r$. Hence for values of~$r$ which are large with respect to~$|G|$, namely for $r \geq K \sqrt[3]{|G|}$ for some constant $K > 0$, our result is a substantial improvement of the estimate of Grohe et al.

		Let~$\G$ be class of graphs excluding a topological minor. For $n, r \in \mathbb{N}$ let
		\[
		F(n,r) := \max \{ \adm_{r}(G) \colon G \in \G, |G| = n \} .
		\]
	We know by Theorem~\ref{planar block theorem strong} that $F(n,r) = \mathcal{O}(\sqrt[3]{n})$ for all $r \in \mathbb{N}$, while Grohe et al~\cite{grohe} showed $F(n,r) = \mathcal{O}(r)$ for all $n \in \mathbb{N}$. It appears to be an interesting problem to try to obtain a unified bound. 
			\end{section}
		
		\section*{Acknowledgements}{
			
			I would like to thank the people of the graph-theory group at the University of Hamburg, in particular Joshua Erde who pointed out that $k$-lean tree-decompositions might help in a proof of Theorem~\ref{fans and blocks theorem}.
		}

 		\newpage 
	\bibliographystyle{plain}
\bibliography{blockbib}

     \end{document}